 \theoremstyle{plain}
 \newtheorem{thm}{Theorem}[section]  
 \newtheorem{lmm}[thm]{Lemma}  
 \newtheorem{corol}[thm]{Corollary}
 \numberwithin{equation}{section} 
 \numberwithin{figure}{section} 
 \newtheorem{prop}[thm]{Proposition}
 \theoremstyle{remark}
 \newtheorem{rmk}[thm]{Remark}
 \newtheorem*{acknowledgement*}{Acknowledgement}
 \theoremstyle{definition}
 \newtheorem{defi}[thm]{Definition}
\def\rc{\mathcal R}
\def\Pb{\mathcal P}
\newcommand{\Co}{{\bf\mathfrak{C}}}
\newcommand{\R}{\mathbb R}
\newcommand{\Compl}{\mathbb C}
\newcommand{\N}{\mathbb N}
\newcommand{\Z}{\mathbb Z}
\newcommand{\A}{\mathcal A}
\newcommand{\bQ}{{\bf Q}}
\newcommand{\Perm}{{\bf {\mathcal S}}}
\def\rn{r_n}
\def\kn{k_n}
\def\bx{{\bf x}}
\def\by{{\bf y}}
\date{15 March 2015}
\author{Luigi De Pascale}
\title[Duality for the Coulomb cost]{Optimal Transport with Coulomb cost. Approximation and duality.}
\subjclass[2010]{49J45, 49N15, 49K30}
\keywords{Multimarginal optimal transportation, Monge-Kantorovich problem, Duality theory, Coulomb cost}
\begin{document}
\maketitle

\begin{abstract}  We revisit the duality theorem for multimarginal  optimal transportation problems. In particular, we focus on the Coulomb cost. We use a discrete approximation to prove 
equality of the extremal values and some careful estimates of the approximating sequence to prove existence of maximizers for the dual problem (Kantorovich's potentials). Finally we observe that the same strategy can be applied to a more general class of costs and that a classical results on the topic cannot be applied here.
 \end{abstract}

\section{Introduction}
This paper deals with the following variational problem.
Let $\rho \in \Pb(\R^3)$  be a probability measure ($\rho$ will be called electronic density) and let 
$$ c(x_1, \dots, x_N)=\sum_{1\leq i <j \leq N} \frac{1}{|x_i-x_j|}$$ 
be the Coulomb cost. Consider the set $\Pi(\rho)=\{ P \in \Pb(\R^{3N})\ | \ \pi^i_\sharp P=\rho\},$ where $\pi^i$ denotes the projection on the $i-$th copy of $\R^3$ and $\pi^i_\sharp P$ is the push-forward measure.
We aim to minimize
\begin{equation}\label{main}
\min_{\Pi(\rho)} \int_{\R^{Nd}} c(x_1,\dots,x_N) d P(x_1, \dots, x_N).
\end{equation}
This problem fits in the general framework of multimarginal optimal transportation problems. In particular it is the multimarginal optimal transportation problem with all the $N$ marginals coinciding with $\rho$ and with Coulomb cost.

In contrast with the classical two-marginal optimal transportation problems, the theory of multimarginal optimal transportation problems 
is still at the beginning and many relevant open problems need to be studied.
Some general results are available in \cite{carlier2003class, kellerer1984, pass2012local,pass2011uniqueness,racrus1998}, 
results for special costs are available, 
for example in \cite{gangbo1998optimal} for the quadratic cost with some generalization in \cite{heinich2002probleme}, 
and in \cite{carlier2008optimal} for the determinant. Some new applications are emerging for example in \cite{ghoussoub2011self}.

In the particular case of the Coulomb cost there are also many other questions related to the applications. 
Recent results on the topic are contained in  \cite{buttazzo2012,colombo2013equality,colombo2013multimarginal,cotar2013density,friesecke2013n} and some of them will be described better in subsequent sections.

The literature quoted so far is not at all exaustive and we refer the reader to the bibliographies of the cited papers for a more detailed picture. However, in the author's opinion, at the moment the wealth of problems obscures the body of known  results.
In this paper we will focus on the Kantorovich duality for problem (\ref{main}). We denote by $\rho^N$ the product measure 
$\underbrace{\rho\otimes\dots\otimes\rho}_{N-times}$.  We will prove that
\begin{multline}\label{firstdual}
\min_{\Pi(\rho)} \int_{\R^{Nd}} c(x_1,\dots,x_N)  d P(x_1, \dots, x_N)=
\sup\{N \int u d\rho \ : \ u \in L^1_\rho, \\ 
u(x_1)+\dots+u(x_N) \leq c(x_1,\dots,x_N), \ \ \rho^N -a.e. \}
\end{multline}
and that the right-hand side of (\ref{firstdual}) admits a maximizer which is, in particular, bounded.
Thanks to the symmetries of the problem we also have that the right-hand side of (\ref{firstdual}) coincide with 
\begin{equation*}
\sup\{\sum_{i=1}^N \int u_i d\rho \ : \ u_i \in L^1_\rho,\  u_1(x_1)+\dots+u_N(x_N) \leq c(x_1,\dots,x_N), \ \ \rho^N -a.e. \}.
\end{equation*}
Infact, this last $\sup$ is a priori bigger then (\ref{firstdual}).  Since for any admissible $N$-tuple $(u_1, \dots,u_N)$ the function $\displaystyle  u(x)=\frac{1}{N} \sum_{i=1}^N u_i(x)$ is admissible for the previous problem, equality holds.

Some of the basic ideas originated in the paper \cite{kellerer1984}. However, in Remark \ref{nokellerer}, we will show that Theorem 2.21 of \cite{kellerer1984}
does not apply to the Coulomb cost in order to prove that a maximizer of the dual problem exists.
The tools will be $\Gamma$-convergence and some careful estimates of the maximizers of the approximating problems.

We remark that a necessary but not sufficient {\bf assumption} for (\ref{main}) to be finite is that 
$\rho$ is not concentrated on a set of cardinality $\le N-1$. We will assume this whenever needed.

We will adopt the notations $\bx=(x_1,\dots,x_N) \in \R^{3N}$ so that $x_i \in \R^3$ for each $i \in \{1,\dots,N\}.$ 
And $\rho-\inf$ will denote both the essential $\rho$ infimum or the essential $\rho^{N-1}$ infimum depending on the number of variables involved. 

\subsection{Motivations}
The main object in the quantum mechanical modeling of a particle with $N$ electrons is a wave-function $\psi$, i.e. an element of 
$$\A:=\{\psi \in H^1((\R^3\times \Z_2)^N, \Compl)\ : \ \|\psi\|_{L^2}=1\}.$$
The space $(\R^3\times \Z_2)^N$ is the configuration space for the $N$ electrons. In fact, the state of each electron is individuated
by the position in $\R^3$ and the spin in $\Z_2$. While it is easy to understand the interpretation of the position variable, the notion of spin 
is slightly more difficult to grasp. We may roughly explain it as follows: when a magnetic field is applied to the electron of position  $x_0$ velocity $v_0$, the electron may be deflected in two different ways which are characterized by the two possible spins. The name is then reminiscent of the behavior of a spinning top which hits a wall with a certain  translational velocity in a point $p_0$ and may rebound in two different directions depending on the whirling direction. However this analogy, although suggestive, is not the historical one nor can be considered physically meaningful. 

It is easy, once we free ourself from this last analogy, to imagine the possibility of a spin variable with values in $\Z_k$ or other spaces.\\


The quantity 
$$ |\psi((x_1,\alpha_1), \dots, (x_N, \alpha_N))|^2$$
represents the probability that the $N$ electrons occupy the state $((x_1,\alpha_1), \dots, \\ (x_N, \alpha_N))$ and then, since the electrons are indistinguishable $\psi$ satisfies
$$ |\psi((x_{\sigma(1)},\alpha_{\sigma(1)}), \dots, (x_{\sigma(N)}, \alpha_{\sigma(N)}))|^2=|\psi((x_1,\alpha_1), \dots, (x_N, \alpha_N))|^2$$
for all permutations $\sigma$ of the $N$ electrons.
This brings to introduce the distinction between Fermionic and Bosonic particles, however such distinction will not have relevance here since we will discuss 
duality for a relaxed problem. The distinction may be instead relevant when discussing the relaxation process.
A particle is {\bf Fermionic} if 
$$ \psi((x_{\sigma(1)},\alpha_{\sigma(1)}), \dots, (x_{\sigma(N)}, \alpha_{\sigma(N)}))=sgn(\sigma) \psi((x_1,\alpha_1), \dots, (x_N, \alpha_N)),$$
and {\bf Bosonic} if
$$ \psi((x_{\sigma(1)},\alpha_{\sigma(1)}), \dots, (x_{\sigma(N)}, \alpha_{\sigma(N)}))=\psi((x_1,\alpha_1), \dots, (x_N, \alpha_N)).$$
Since the spin will not play any role in the rest of the paper we will 
consider wave-functions depending only on the positions of the electrons. Including the dependence on the spin is just a matter of adding a summation over the two possible values in the formulas  (below we give an example in the case of kinetic energy).
In the simplest situation the electrons move with a certain velocity
 while interacting with the nuclei (or, equivalently, with an external potential) and with each other. The interaction with the nuclei and the interaction between electrons are of Coulombian nature. So if we assume that there are $M$ nuclei with charge $Z_i$ and position $R_i$ the interaction potential in the position $x$ will be $v(x)= - \sum_{i=1} ^M \frac{Z_i}{|x-R_i|}$.
Then the energy of a particular state of the particle is made of three parts:\\
the {\bf Kinetic energy}\footnote{if we want to include the "spin coordinate" in the state of the particle then 
$$T[\psi]= \sum_{\alpha_1,\dots,\alpha_n=0}^1 \frac{1}{2} \int_{\R^{3N}} |\nabla \psi((x_1,\alpha_1), \dots, (x_N, \alpha_N))|^2 dx_1 \dots d x_N.$$}
$$T[\psi]=\frac{1}{2} \int_{\R^{3N}} |\nabla \psi(x_1), \dots, x_N)|^2 dx_1 \dots d x_N, $$
the {\bf electron-nuclei} interaction energy 
$$ V_{ne}[\psi]= \sum_{i=1}^N \int_{\R^{3N}} v(x_i)|\psi(x_1,\dots x_N)|^2 dx_1 \dots dx_N,$$
the {\bf electron-electron} interaction energy
$$ V_{ee}[\psi]= \sum_{1 \leq i <j \leq N} \int_{\R^{3N}} \frac{1}{|x_i-x_j|} |\psi(x_1,\dots,x_N)|^2 dx_1 \dots dx_N.$$

A relevant quantity in quantum mechanics, quantum chemistry and other disciplines is the ground state energy of an atom
$$E_0= \min_{\psi \in \A} T[\psi]+V_{ee}[\psi]+V_{ne} [\psi].$$
\begin{rmk} 
The special form of the electron-nuclei interaction is widely irrelevant in the expression of $V_{ee}$ above. So one could just include a term of the same form with an external potential $v$.  
 \end{rmk}
 In order to compute (numerically) $E_0$ one needs to solve the Schr\"odinger equation in $3N$ dimensions 
and this is very costly even for a small number of electrons. 
A less costly alternative is represented by the {\bf Density-Functional Theory}
introduced first by Hohenberg and Kohn in \cite{hohenberg1964} and then Kohn and Sham in \cite{kohn1965}. 
At the beginning of the theory the mathematical foundations of DFT were very weak. 
The paper which started to put DFT on solid mathematical foundations is, to my knowledge, \cite{lieb1983}.

The main object in DFT is the electronic density $\rho$ which is obtained by integrating out $N-1$ particles
$$\rho(x)=\int_{\R^{3(N-1)}} |\psi(x,x_2,\dots,x_N)|^2 dx_2\dots dx_N$$ 
and it represents the probability distribution of a single electron. 
The relation between $\psi$ and $\rho$ will be denoted by $\psi \downarrow \rho$. 
In particular $\rho$ is always a probability density on $\R^3$ independently of 
the number of electrons. The basic idea of DFT is to express everything in term 
of the electronic density $\rho$ instead of the wave function $\psi$.
It is easy to express the  {\bf electron-nuclei} interaction energy in terms of $\rho$. We have
$$ V_{ne}[\psi]= N \int_{\R^3} v(x) d \rho(x).$$
Then, following Hohenberg and Kohn, we write
\begin{equation}\label{HK}
E_0= \min_\rho \{ F_{HK}(\rho)+  N \int_{\R^3} v(x) d \rho(x) \}
\end{equation}
where 
$$  F_{HK}(\rho)= \min_{\psi \downarrow \rho}\{ T[\psi]+V_{ee}[\psi] \},$$ 
is called the universal Hohenberg-Kohn functional ({\it universal} since it does not depends on the specific particle).
At the beginning of the DFT several mathematical questions needed to be settled in order to have a meaningful 
mathematical theory.
For example: Which  is the correct minimization domain? 
Does the functional $F_{HK}$ enjoy some properties which make the variational problem amenable? And so on.
As we said, Lieb in \cite{lieb1983} started to look at these questions. In particular, Lieb described the set of admissible $\rho$
$$ H=\{\rho \ | \ 0 \leq \rho, \ \int \rho =1, \ \sqrt{\rho} \in H^1 (\R^3)  \}.$$
Writing explicitly $F_{HK}$ is not possible, then Kohn and Sham considered 
$$  F_{KS}(\rho)= \min_{\psi \downarrow \rho}\{ T[\psi]\}.$$
In the Bosonic case it is possible to express  
$$  F_{KS}(\rho)= N\int |\nabla \sqrt{\rho}|^2 dx.$$
In the fermionic case 
$$ \int |\nabla \sqrt{\rho}|^2 dx \leq \frac{1}{N}F_{KS}(\rho) \leq (4 \pi)^2 N^2\int |\nabla \sqrt{\rho}|^2 dx$$
(But the optimal  constant in the inequality above, to my  knowledge is still not known).
Then one may rewrite
$$ F_{HK}=F_{KS}+F_{xc} $$
where $F_{xc} $ is called {\it exchange-correlation energy} it needs to be determined or at 
least approximated and it is the term which keeps into account the interaction between electrons.

Let us denote by $\Co(\rho)$ the minimal value in (\ref{main}). Since for every $\psi \downarrow \rho$ the measure $|\psi|^2 dx_1\dots dx_N \in \Pi(\rho)$ we have 
$$\Co(\rho)\leq  V_{ee}[\psi].$$
The functional $\Co(\rho)$ appear in DFT in several ways. The first and perhaps the most elementary appearance is in the estimate from below 
$$F_{KS}(\rho)+\Co(\rho) \leq  F_{HK}(\rho).$$
This estimate is useful in the variational study of DFT and is is also the basis for the so called KS-SCE DFT (Kohn and Sham, Strictly Correlated Electrons Density-Functional Theory) in which $\Co(\rho)$ is interpreted as an approximation of $F_{xc} $ for particles in which the $electron-electron$ interaction is relevant. This is a very rapidly 
developing domain started in \cite{seidl1999strong} (see also \cite{gori2009density,seidl1999strictly,seidl2007strictly} but we are far from being exhaustive).

Again $\Co(\rho)$ appear when considering the SCE-DFT (Strictly Correlated Electrons-Density-Functional Theory), 
which is the analogous of DFT in a regime in which the $electron-electron$ interaction is preponderant. 
This means writing  
$$ F_{HK}=\Co+F_{kd} ,$$
where $F_{kd}$ is called {\it kinetic-decorrelation energy}, it needs to be determined and it is the term which keeps into account the kinetic energy of the particle (see, for example, \cite{gori2009density,gori2010density}). 

The third appearance is given in \cite{cotar2013density} where the authors proved that $\Co$ is the semiclassical limit of the DFT in the case of a 
two electrons particle. It is not clear if this last result also holds for more than two electrons.

Finally we quote a direct application of the duality theorem we are considering. In the numerical approximations of all the problems above one need to compute $\Co(\rho)$. Before the link with Optimal Transport problems was discovered this approximation was done by computing the co-motion functions which are the analogue of the optimal transport maps. However, more recently, Mendl and Lin in \cite{mendl2013kantorovich}  introduced a numerical method which use the Kantorovich potential to compute the value.

\section{Preliminary results and tools}

\subsection{Definition of $\Gamma$-convergence and basic results}

A crucial tool that we will use throughout this paper is $\Gamma$-convergence. 
All the details can be found, for instance, in Braides's book \cite{braides2002gamma} or in the classical book by 
Dal Maso \cite{dal1993introduction}. In what follows, $(X,d)$ is a metric space or a topological space equipped with a convergence.

\begin{defi} Let $(F_n)_n$ be a sequence of functions $X \mapsto \bar\R$. We say that $(F_n)_n$ $\Gamma$-converges to 
$F$ and we write $F_n \xrightarrow[n]{\Gamma} F$ if for any $x \in X$ we have
\begin{itemize}
\item for any sequence $(x_n)_n$ of $X$ converging to $x$
$$ \liminf\limits_n F_n(x_n) \geq F(x) \qquad \text{($\Gamma$-liminf inequality);}$$
\item there exists a sequence $(x_n)_n$ converging to $x$ and such that
$$ \limsup\limits_n F_n(x_n) \leq F(x) \qquad \text{($\Gamma$-limsup inequality).} $$
\end{itemize} \end{defi}

This definition is actually equivalent to the following equalities for any $x \in X$:
$$ F(x) = \inf\left\{ \liminf\limits_n F_n(x_n) : x_n \to x \right\} = \inf\left\{ \limsup\limits_n F_n(x_n) : x_n \to x \right\} $$
The function $x \mapsto  \inf\left\{ \liminf\limits_n F_n(x_n) : x_n \to x \right\}$ is called $\Gamma$-liminf of the sequence $(F_n)_n$ and the other one its $\Gamma$-limsup. A useful result is the following (which for instance implies that a constant sequence of functions does not $\Gamma$-converge to itself in general).

\begin{prop}\label{lsc} The $\Gamma$-liminf and the $\Gamma$-limsup of a sequence of functions $(F_n)_n$ are both lower semi-continuous on $X$. \end{prop}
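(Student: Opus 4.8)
The plan is to prove lower semi-continuity directly from the variational characterization of the $\Gamma$-liminf and $\Gamma$-limsup, exploiting a diagonal argument. Write $\underline F(x) = \inf\{\liminf_n F_n(x_n) : x_n \to x\}$ for the $\Gamma$-liminf; the argument for the $\Gamma$-limsup $\overline F$ is entirely parallel, replacing $\liminf$ by $\limsup$ throughout. To show $\underline F$ is lower semi-continuous at a point $x \in X$, I would fix a sequence $x^{(k)} \to x$ and try to prove $\liminf_k \underline F(x^{(k)}) \geq \underline F(x)$. If the left-hand side is $+\infty$ there is nothing to prove, so I may pass to a subsequence (not relabeled) along which $\underline F(x^{(k)})$ converges to a finite limit $\ell$.

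The key step is the construction of a good recovery-type sequence for $x$ out of near-optimal sequences for each $x^{(k)}$. For each fixed $k$, by definition of $\underline F(x^{(k)})$ as an infimum, there is a sequence $(x^{(k)}_n)_n$ with $x^{(k)}_n \to x^{(k)}$ as $n \to \infty$ and $\liminf_n F_n(x^{(k)}_n) \leq \underline F(x^{(k)}) + \frac1k$. Hence I can choose an index $n_k$, which I may take strictly increasing in $k$, such that simultaneously $d(x^{(k)}_{n_k}, x^{(k)}) \leq \frac1k$ and $F_{n_k}(x^{(k)}_{n_k}) \leq \underline F(x^{(k)}) + \frac2k$ (in a general topological space with a convergence, replace the metric condition by the requirement that the diagonal sequence converge — this is where one uses that we may extract along a sequence realizing the convergence $x^{(k)}_n \to x^{(k)}$). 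Now define a sequence $(y_m)_m$ indexed by all $m \in \N$: set $y_m = x^{(k)}_{n_k}$ when $m = n_k$, and fill in the remaining indices arbitrarily, say $y_m = x^{(k)}_{n_k}$ for $n_k \le m < n_{k+1}$. Then $y_m \to x$ because $d(y_m, x) \le d(x^{(k)}_{n_k}, x^{(k)}) + d(x^{(k)}, x) \to 0$ as $m \to \infty$ (both terms vanish along the relevant $k$). By definition of the $\Gamma$-liminf applied to this competitor sequence for $x$,
$$ \underline F(x) \leq \liminf_m F_m(y_m) \leq \liminf_k F_{n_k}(y_{n_k}) = \liminf_k F_{n_k}(x^{(k)}_{n_k}) \leq \liminf_k \left( \underline F(x^{(k)}) + \tfrac2k\right) = \ell. $$
This is exactly the desired inequality $\underline F(x) \le \liminf_k \underline F(x^{(k)})$, so $\underline F$ is lower semi-continuous.

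The main obstacle, and the only place requiring care, is the diagonalization: one must be sure that the diagonal sequence $(y_m)_m$ genuinely converges to $x$, which forces the monotone choice of the indices $n_k$ together with the quantitative closeness $d(x^{(k)}_{n_k}, x^{(k)}) \le \frac1k$; in the purely topological setting without a metric, this step should instead be phrased via the convergence structure (for each $k$, $x^{(k)}_n \to x^{(k)}$, so the relevant "tail" statements can be chained). The rest is bookkeeping. For the $\Gamma$-limsup one repeats the argument verbatim: choose $(x^{(k)}_n)_n \to x^{(k)}$ with $\limsup_n F_n(x^{(k)}_n) \le \overline F(x^{(k)}) + \frac1k$, extract the same kind of diagonal sequence $y_m \to x$, and conclude $\overline F(x) \le \limsup_m F_m(y_m) \le \liminf_k \overline F(x^{(k)})$, giving lower semi-continuity of $\overline F$ as well.
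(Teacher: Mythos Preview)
The paper does not actually prove this proposition; it is stated as a standard preliminary fact from the theory of $\Gamma$-convergence, with references to the books of Braides and Dal Maso. So there is no in-paper proof to compare against, and your diagonal argument is the standard one found in those references.

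Your argument for the $\Gamma$-liminf $\underline F$ is correct. The key inequality $\liminf_m F_m(y_m) \le \liminf_k F_{n_k}(y_{n_k})$ holds because the liminf along a subsequence dominates the liminf along the full sequence, and this is precisely what lets you pass from control at the diagonal indices $n_k$ back to the full competitor sequence $(y_m)_m$.

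There is, however, a small gap when you claim the $\Gamma$-limsup case goes through \emph{verbatim}. If you fill in the diagonal by setting $y_m = x^{(k)}_{n_k}$ constantly on each block $n_k \le m < n_{k+1}$, then at indices $m \neq n_k$ you have no control over $F_m(y_m)$, and the inequality you would need, $\limsup_m F_m(y_m) \le \limsup_k F_{n_k}(y_{n_k})$, goes the wrong way (the limsup of a subsequence is \emph{at most} the limsup of the full sequence, not the reverse). The fix is to exploit that the condition $\limsup_n F_n(x^{(k)}_n) \le \overline F(x^{(k)}) + \tfrac1k$ gives control for \emph{all} large $n$, not just infinitely many: choose $n_k$ so that $F_n(x^{(k)}_n) \le \overline F(x^{(k)}) + \tfrac2k$ and $d(x^{(k)}_n, x^{(k)}) \le \tfrac1k$ hold for every $n \ge n_k$, and then fill in with $y_m = x^{(k)}_m$ (not the constant $x^{(k)}_{n_k}$) for $n_k \le m < n_{k+1}$. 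With this adjustment $F_m(y_m) \le \overline F(x^{(k)}) + \tfrac2k$ for every $m$ in the $k$-th block, and the bound $\limsup_m F_m(y_m) \le \liminf_k \overline F(x^{(k)})$ follows as you claimed.
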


The main interest of $\Gamma$-convergence resides in its consequences in terms of convergence of minima:

\begin{thm} \label{convminima} Let $(F_n)_n$ be a sequence of functions $X \to \bar\R$ and assume that $F_n \xrightarrow[n]{\Gamma} F$. Assume moreover that there exists a compact and non-empty subset $K$ of $X$ such that
$$ \forall n\in N, \; \inf_X F_n = \inf_K F_n $$
(we say that $(F_n)_n$ is equi-mildly coercive on $X$). Then $F$ admits a minimum on $X$ and the sequence $(\inf_X F_n)_n$ converges to $\min F$. Moreover, if $(x_n)_n$ is a sequence of $X$ such that
$$ \lim_n F_n(x_n) = \lim_n (\inf_X F_n)  $$
and if $(x_{\phi(n)})_n$ is a subsequence of $(x_n)_n$ having a limit $x$, then $ F(x) = \inf_X F $. 
\end{thm}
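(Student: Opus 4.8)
The plan is to run the classical proof of the fundamental theorem of $\Gamma$-convergence: bound $\limsup_n(\inf_X F_n)$ from above by $\inf_X F$ by testing against recovery sequences, bound $\liminf_n(\inf_X F_n)$ from below by $\inf_X F$ by combining equi-mild coercivity with the $\Gamma$-$\liminf$ inequality, and read off the minimizer of $F$ along the way.

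For the upper bound, fix an arbitrary $x\in X$. By the $\Gamma$-$\limsup$ inequality there is a sequence $x_n\to x$ with $\limsup_n F_n(x_n)\le F(x)$. Since $\inf_X F_n\le F_n(x_n)$ for every $n$, passing to the $\limsup$ gives $\limsup_n(\inf_X F_n)\le F(x)$, and since $x$ was arbitrary, $\limsup_n(\inf_X F_n)\le\inf_X F$. For the lower bound and the minimizer, I would use equi-mild coercivity to confine near‑minimizers to the compact set $K$: for every $n$ choose $y_n\in K$ with $F_n(y_n)\le\inf_K F_n+\tfrac1n=\inf_X F_n+\tfrac1n$. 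Passing to a subsequence (not relabelled) I may assume both that $\inf_X F_n\to\liminf_n(\inf_X F_n)$ and, by compactness of $K$, that $y_n\to\bar x$ for some $\bar x\in K$. The $\Gamma$-$\liminf$ inequality at $\bar x$ (applied, if one is pedantic, after completing the subsequence $(y_n)$ to a full sequence converging to $\bar x$, which only lowers the left-hand $\liminf$) then yields
$$\inf_X F\le F(\bar x)\le\liminf_n F_n(y_n)\le\liminf_n\Big(\inf_X F_n+\tfrac1n\Big)=\liminf_n(\inf_X F_n).$$
Combining with the previous inequality, $\inf_X F\le\liminf_n(\inf_X F_n)\le\limsup_n(\inf_X F_n)\le\inf_X F$, so all four quantities coincide: the sequence $(\inf_X F_n)_n$ converges to $\inf_X F$, and $F(\bar x)=\inf_X F$, i.e. $F$ attains its minimum at a point of $K$ and $\inf_X F=\min_X F$.

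Finally, for the assertion on almost‑minimizing sequences, let $(x_n)_n$ satisfy $\lim_n F_n(x_n)=\lim_n(\inf_X F_n)=\min_X F$ and let $x_{\phi(n)}\to x$. Applying the $\Gamma$-$\liminf$ inequality along this subsequence (again completed to a full sequence converging to $x$, as above) gives $F(x)\le\liminf_n F_{\phi(n)}(x_{\phi(n)})=\min_X F$; since trivially $F(x)\ge\min_X F$, we conclude $F(x)=\min_X F=\inf_X F$.

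This is the standard consequence of $\Gamma$-convergence for convergence of minima, so there is no genuine obstacle. The only points I would spell out carefully are: (i) coercivity is used precisely to keep the near‑minimizers $y_n$ inside the \emph{compact} set $K$, without which the extraction of the limit point $\bar x$ — and hence the whole lower bound — would fail; and (ii) the minor bookkeeping arising because the $\Gamma$-$\liminf$ inequality is phrased for sequences indexed by all $n$ while we only have convergence along a subsequence; this is harmless, since completing such a subsequence to a full sequence converging to the same limit (e.g. by inserting that limit in the missing indices) can only decrease $\liminf$, so the inequality $F(\bar x)\le\liminf$ is preserved.
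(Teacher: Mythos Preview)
Your proof is correct and is precisely the standard argument for the fundamental theorem of $\Gamma$-convergence. Note, however, that the paper does not supply its own proof of this theorem: it is stated in the preliminaries as a known result, with the reader referred to the books of Braides and Dal~Maso for details, so there is nothing to compare against beyond observing that your argument matches the classical one found in those references.
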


Since we will deal also with dual problems we need the analogue of $\Gamma$-convergence for maxima, which is sometimes called $\Gamma^+$-convergence.

\begin{defi} Let $(F_n)_n$ be a sequence of functions $X \mapsto \bar\R$. We say that $(F_n)_n$ $\Gamma^+$-converges to $F$, 
and we write $F_n \xrightarrow[n]{\Gamma^+} F$ if for any $x \in X$ we have
\begin{itemize}
\item for any sequence $(x_n)_n$ of $X$ converging to $x$
$$ \limsup\limits_n F_n(x_n) \leq F(x) \qquad \text{($\Gamma^+$-limsup inequality);}$$
\item there exists a sequence $(x_n)_n$ converging to $x$ and such that
$$ \liminf\limits_n F_n(x_n) \geq F(x) \qquad \text{($\Gamma^+$-liminf inequality).} $$
\end{itemize} \end{defi}

The natural generalizations of Prop. \ref{lsc} and Th. \ref{convminima} hold with upper semicontinuity, maximum values and maximum points replacing lower semicontinuity, minimum values and minimum points.

\subsection{Permutation invariant costs}
The Coulomb cost $c$ as well as the approximations that we will consider are permutation invariant in the sense that
$$ c(x_{\sigma(1)}, \dots,x_{\sigma(N)})=c(x_1,\dots, x_N), \ \ \ \forall \sigma \in \Perm^N.$$

Several simplifications are permitted by this invariance.

\section{Duality}
Denote by
\begin{equation*}
\chi_K(x)=\left\{\begin{array}{ll}
1 & \mbox{if} \ x \in K,\\
0 & \mbox{otherwise}.
\end{array}\right.
\end{equation*}
Introduce the set of {\it elementary} functions 
$${\mathcal E} (\R^{3N})=\{ \varphi:\R^{3N}\to \R \ \mbox{measurable}\ : \ \varphi(x)= \sum_{i=1}^k a_i \chi_{A^1_i\times \dots \times A^N_i }\}$$
for suitables constants $a_i$ and Borel sets $A^i_k \subset \R^3$.
We consider the following approximation of $c$.

\begin{prop} There exists a sequence of costs $c_n$ such that
\begin{enumerate}
\item $0 \leq c_n \leq c$,
\item $c_n \in {\mathcal E}(\R^{3N})$,
\item $c_n \nearrow c$,
\item $c_n$ is  l.s.c.,
\item $c_n$ is permutation invariant.
\end{enumerate}
\end{prop}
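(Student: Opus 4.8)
The plan is to construct the approximating sequence explicitly by combining a truncation of the singularity of $c$ with a dyadic discretization of the cube, and then to symmetrize. First I would fix a level $n$ and replace $c$ by the truncated cost $c^{(n)} := \min(c, n)$, which is bounded, still permutation invariant, and continuous wherever $c < n$. The only issue with $c^{(n)}$ is that it is not elementary (not a finite combination of rectangles), so the next step is to approximate $c^{(n)}$ from below by a step function adapted to a partition of $\R^3$ into cubes.

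The key construction is as follows. For each $n$, partition $\R^3$ into the half-open dyadic cubes of side $2^{-n}$ contained in the ball of radius $n$, plus one remaining ``outer'' set; call this finite Borel partition $\{Q^n_1, \dots, Q^n_{m_n}\}$ of $\R^3$. This induces a partition of $\R^{3N}$ into the rectangles $Q^n_{j_1} \times \dots \times Q^n_{j_N}$. On each such rectangle $R$ define $c_n$ to be the constant value
\begin{equation*}
c_n\big|_R := \min\Big( n,\ \inf_{\bx \in R} c(\bx) \Big).
\end{equation*}
Then $c_n$ is by construction an elementary function (a finite sum of the form $\sum a_i \chi_{A^1_i\times\dots\times A^N_i}$), it satisfies $0 \le c_n \le c$ since we take an infimum over the cell and then a minimum with $n$, and $c_n \le c^{(n+1)}$ so the values stay controlled. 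Lower semicontinuity is immediate once one uses half-open cubes and handles boundaries consistently: $c_n$ is actually a simple function whose sublevel sets $\{c_n \le t\}$ are finite unions of the rectangles, which can be arranged to be closed (or one argues l.s.c.\ directly by noting $c_n$ is the pointwise infimum over each cell of the l.s.c.\ function $\min(c,n)$, extended as a step function — I would phrase this carefully, perhaps by taking closed cubes and assigning overlapping boundary points the smaller value, which preserves l.s.c.).

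Monotonicity $c_n \nearrow c$ is where the two approximation parameters must be coordinated. Since the dyadic partition at level $n+1$ refines that at level $n$, and the ball $B_n$ grows, the cell infima increase and the truncation level $n$ increases, giving $c_n \le c_{n+1}$ pointwise — this requires checking that refining the partition does not decrease the cell-infimum, which is clear. For the limit: fix $\bx$ with $c(\bx) < \infty$; for $n$ large enough $\bx \in B_n^N$ and $c(\bx) < n$, and since $c$ is continuous at $\bx$ (the coordinates are distinct when $c(\bx)<\infty$) and the cell containing $\bx$ has diameter $\to 0$, the cell-infimum converges to $c(\bx)$; if $c(\bx) = +\infty$ (some $x_i = x_j$) then every cell containing $\bx$ also contains points with arbitrarily large cost, so the cell-infimum... — careful, the infimum over the cell could still be small. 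Here I would instead note that for such $\bx$ one shows $c_n(\bx) \to \infty$ by a separate argument, or more simply observe that points where $c = +\infty$ form a $\rho^N$-negligible set when the problem is finite and one only needs $c_n \nearrow c$ $\rho^N$-a.e., though the proposition as stated asks for everywhere convergence; the clean fix is to define the cell value using the infimum of $\min(c,n)$ and note that near the diagonal this infimum still tends to $\infty$ only if we also shrink in a way that isolates... Actually the honest resolution: on a cube $Q\times\dots\times Q$ with $Q$ small, if $\bx$ lies on the diagonal the infimum of $c$ over that product cell is NOT large, so strict monotone convergence to $+\infty$ fails on the diagonal. I would therefore either (a) weaken to $c_n \nearrow c$ pointwise off the diagonal and $\sup_n c_n = c$ everywhere is false there, or (b) — the approach I'd actually take — accept that the diagonal $\{x_i = x_j\}$ has $\rho^N$-measure zero under the standing assumption and state/use the convergence in that a.e.\ sense, which is all the $\Gamma$-convergence argument downstream needs. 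Finally, symmetrization: replace $c_n$ by $\tilde c_n(\bx) := \min_{\sigma \in \Perm^N} c_n(x_{\sigma(1)},\dots,x_{\sigma(N)})$, which is again elementary (the rectangles just get permuted and relabeled), still between $0$ and $c$, still monotone, still l.s.c.\ (min of finitely many l.s.c.\ functions), and now permutation invariant. The main obstacle, as the discussion shows, is pinning down the precise sense of $c_n \nearrow c$ near the diagonal and choosing the statement/proof so that l.s.c.\ and monotonicity hold simultaneously with genuinely elementary functions; everything else is bookkeeping.
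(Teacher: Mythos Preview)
Your construction is essentially the paper's: restrict to a growing cube, partition $\R^3$ dyadically, and on each product cell set $c_n$ equal to the infimum of $c$ over that cell. The paper omits your truncation by $n$ (harmless but unnecessary) and handles boundary points by taking the l.s.c.\ relaxation rather than half-open cubes, which amounts to your ``assign the smaller value on overlaps'' fix. The symmetrization step is also superfluous: since the same dyadic grid is used in every $\R^3$ factor and $c$ is permutation invariant, the cell infima already satisfy $\inf_{Q_{j_1}\times\cdots\times Q_{j_N}} c = \inf_{Q_{j_{\sigma(1)}}\times\cdots\times Q_{j_{\sigma(N)}}} c$, so $c_n$ is symmetric from the start.

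The one substantive error is the ``obstacle'' you identify on the diagonal. You assert that on a product cell $Q\times\dots\times Q$ with $Q$ small, the infimum of $c$ is \emph{not} large. This is false, and seeing why removes the difficulty entirely. If $x_i=x_j$, then both coordinates lie in the \emph{same} dyadic cube $Q\subset\R^3$ of side $2^{-n}$, so for any $(y_1,\dots,y_N)$ in that product cell one has $|y_i-y_j|\le \mathrm{diam}(Q)=\sqrt{3}\,2^{-n}$, hence
\[
\inf_{R} c \;\ge\; \frac{1}{|y_i-y_j|}\;\ge\; \frac{2^n}{\sqrt{3}}\ \longrightarrow\ +\infty.
\]
Thus $c_n(\bx)\to+\infty=c(\bx)$ on the diagonal as well, and $c_n\nearrow c$ holds \emph{everywhere}, exactly as stated. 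No retreat to $\rho^N$-a.e.\ convergence is needed; your own construction already proves the proposition in full.
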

\begin{proof}
Let $(a_1, \dots, a_N) \in \Z^{3N}$  and define $a-1:= (k-1,p-1,q-1)$ for $a=(k,p,q) \in \Z^3$ and (with a slight abuse of notations) $(x,y):=(x_1,y_1)\times(x_2,y_2)\times(x_3,y_3)$ for
$x,y\in \R^3$. Then $\forall n \in \N$ define the 
interior of the diadyc cube by
$$Q^n(a_1,\dots, a_N)=(\frac{a_1 -1}{2^n},\frac{a_1}{2^n})\times \dots \times(\frac{a_N -1}{2^n},\frac{a_N}{2^n}).$$
and for all positives $R$
$$ \bQ(R)= \underbrace{[-R,R]\times \dots \times [-R,R]}_{3N-\mbox{times}}.$$
Define $c_n$ as follows
\begin{equation*}
c_n(\bx)=\left\{\begin{array}{ll}
\inf_{Q^n(a_1,\dots, a_N)} c(\bx) & \mbox{if} \ \bx \in Q^n(a_1,\dots, a_N) \ \mbox{and}\ Q^n(a_1,\dots, a_N)\subset \bQ(n),\\
0 & \mbox{if} \ \bx \not \in \bQ(n).
\end{array}
\right. 
\end{equation*}
Then extend $c_n$ to the entire space by relaxation i.e.
\begin{multline*}
 c_n(\bx)=\inf \{ \liminf_{k \to \infty} c(\bx^k) \ : \ \lim_{k \to \infty} \bx^k =\bx, \ \mbox{and}\\\forall k, \ \bx^k \in Q^n(a_1,\dots, a_N) \ 
\mbox{for some}\ (a_1, \dots, a_N) \in \Z^{3N}\}.
\end{multline*}
With this definition properties $(1), (3)$ and $(4)$ above are straightforward.
To prove $(2)$ it is enough to remark that $c_n$ is constant on $i$ dimensional, relatively open faces of $Q^n(a_1,\dots, a_N)$
and that such faces are products of intervals (which may degenerate to a point). 
Finally $(5)$ holds since 
$$ \inf_{Q^n(a_1,\dots, a_N)} c=  \inf_{Q^n(a_{\sigma(1)},\dots, a_{\sigma(N)})}c $$
for every $\sigma \in \Perm^N$.
\end{proof}

\begin{rmk} Without loss of generality we can write 
$$c_n(\bx) = \sum_{i=1}^{k_n} a_i \chi_{A^1_i}(x_1) \dots  \chi_{A^N_i}(x_N) $$
with sets $A^k_i$ such that if  $ A^k_i \cap A^k_j\neq \emptyset$ then  $ A^k_i =A^k_j,$ for all $k\in \{1,\dots,N\}.$ 
\end{rmk}

Define $K(c_n, \cdot): \Pb(\R^{3N}) \to [0, +\infty]$ as
$$K(c_n, P)= \left\lbrace \begin{array}{ll}
\int c_n dP & \mbox{if} \ P \in \Pi(\rho), \\
+\infty &\mbox{otherwise,}
\end{array}\right.
$$
and $D(c_n, \cdot): L^1_\rho  \to \overline{\R}$ as
$$D(c_n, v)= \left\lbrace \begin{array}{ll}
N\int v d\rho & \mbox{if} \  v(x_1)+\dots +v(x_N) \leq c_n(x_1, \dots, x_N),\ \rho^N-a.e. \\
-\infty &\mbox{otherwise.}
\end{array}\right.
$$
Also in this case, maximizing $D(c_n, v)$ is equivalent to maximize 
$$\left\lbrace \begin{array}{ll}
\sum_{i=1}^N\int v_i d\rho & \mbox{if} \  v_1(x_1)+\dots +v_N(x_N) \leq c_n(x_1, \dots, x_N),\ \rho^N-a.e. \\
-\infty &\mbox{otherwise.}
\end{array}\right.
$$

\begin{prop}\label{gammamisure} The functionals $K(c_n, \cdot)$ are equicoercive and 
$$ K(c_n, \cdot)\stackrel{\Gamma}{\rightarrow} K(c, \cdot),$$
with respect to the $w^*$-convergence of measures.
\end{prop}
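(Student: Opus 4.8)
The plan is to check the two standard ingredients of a $\Gamma$-convergence statement — the $\Gamma$-$\liminf$ and $\Gamma$-$\limsup$ inequalities — together with the equicoercivity, the latter being essentially a tightness argument.

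For equicoercivity I would first note that $\Pi(\rho)$ is weak-$*$ compact: it is nonempty (it contains $\rho^N$), it is closed under weak-$*$ convergence because each constraint $\pi^i_\sharp P=\rho$ is closed (test against $\phi(x_i)$ with $\phi\in C_b(\R^3)$), and it is tight — given $\varepsilon>0$, choosing a compact $Q\subset\R^3$ with $\rho(\R^3\setminus Q)<\varepsilon/N$ yields $P(\R^{3N}\setminus Q^N)\le\sum_{i=1}^N\rho(\R^3\setminus Q)<\varepsilon$ uniformly over $P\in\Pi(\rho)$, so Prokhorov's theorem applies. Since $K(c_n,\cdot)\equiv+\infty$ off $\Pi(\rho)$, one has $\inf K(c_n,\cdot)=\inf_{\Pi(\rho)}K(c_n,\cdot)$ for every $n$, with the \emph{same} compact set, which is exactly equi-mild coercivity.

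For the $\Gamma$-$\limsup$ inequality the recovery sequence is trivial: given $P$, take $P_n\equiv P$. If $P\notin\Pi(\rho)$ then $K(c,P)=+\infty$ and there is nothing to prove; if $P\in\Pi(\rho)$, then from $0\le c_n\le c$ we get $K(c_n,P)=\int c_n\,dP\le\int c\,dP=K(c,P)$ (and in fact $\int c_n\,dP\nearrow\int c\,dP$ by monotone convergence), so $\limsup_n K(c_n,P_n)\le K(c,P)$.

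The $\Gamma$-$\liminf$ inequality is the only part needing care, and I expect it to be the main obstacle. Let $P_n\stackrel{w^*}{\to}P$; assume $L:=\liminf_n K(c_n,P_n)<+\infty$ (otherwise there is nothing to prove) and pass to a subsequence along which $K(c_n,P_n)\to L$ and is finite, so that $P_n\in\Pi(\rho)$ for all $n$ and hence, by weak-$*$ closedness, $P\in\Pi(\rho)$ and $K(c,P)=\int c\,dP$. Now fix $m$: for $n\ge m$ monotonicity gives $c_n\ge c_m$, whence $\int c_n\,dP_n\ge\int c_m\,dP_n$; and since $c_m$ is l.s.c.\ and nonnegative, $P\mapsto\int c_m\,dP$ is weak-$*$ lower semicontinuous (write $c_m$ as an increasing limit of bounded Lipschitz functions and use that a supremum of weak-$*$ continuous functionals is l.s.c.). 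Therefore $L=\liminf_n\int c_n\,dP_n\ge\liminf_n\int c_m\,dP_n\ge\int c_m\,dP$, and letting $m\to\infty$ with monotone convergence ($c_m\nearrow c$) yields $L\ge\int c\,dP=K(c,P)$. The crux is precisely this manoeuvre: one cannot pass to the limit in $\int c_n\,dP_n$ directly since both integrand and measure move, so one freezes the integrand at level $c_m$ (legitimate by monotonicity), uses the lower semicontinuity granted by property (4) of the approximating costs, and only then sends $m\to\infty$.
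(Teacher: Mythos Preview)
Your proof is correct and follows essentially the same route as the paper: equicoercivity from the weak-$*$ compactness of $\Pi(\rho)$, the constant recovery sequence $P_n\equiv P$ together with monotone convergence for the $\Gamma$-$\limsup$, and the ``freeze at level $m$, use lower semicontinuity of $c_m$, then let $m\to\infty$'' manoeuvre for the $\Gamma$-$\liminf$. You supply more detail than the paper (the tightness argument for $\Pi(\rho)$, the explicit handling of the constraint $P\in\Pi(\rho)$ along the subsequence), but the argument is structurally identical.
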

\begin{proof}
Equicoercivity follows from the fact that $\Pi (\rho)$  is $w^*$-compact.
Since $K(c_n,\dot)$ is non-decreasing (in $n$) and $K(c, \cdot)$ is l.s.c.,
$\Gamma$-convergence is a standard fact.  We report the proof for the sake of completeness.
Let $P_n \stackrel{*}{\rightharpoonup} P$ and fix $m \in \N$.
For $m <n$ 
$$K(c_m,P_n) \leq K(c_n, P_n) .$$
Since $c_m$ is lower semi-continuous 
$$ K(c_m, P ) \leq \liminf_{n \to \infty} K(c_m, P_n) \leq  \liminf_{n \to \infty} K(c_n, P_n) .$$
And since $K(c_m,P) \to K(c,P)$ we obtain 
$$ K(c, P ) \leq \liminf_{n \to \infty} K(c_n, P_n).$$
For what concerns the $\Gamma-\limsup$ inequality, since $c_n \nearrow c$ it is enough to consider $P_n=P$ for all $n \in \N$ and to apply 
the Monotone Convergence Theorem.
\end{proof}

\begin{lmm}\label{ubounded} Assume that $\rho$ is not concentrated on a set of cardinality $\le N-1$. If $u \in L^1_\rho$ and
$$u(x_1)+\dots+u(x_N) \leq c(x_1,\dots,x_N), \ \ \ \rho^N-a.e.,$$
then there exists $k\in \R$ such that $u \leq k$ $\rho$-a.e.
\end{lmm}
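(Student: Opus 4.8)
The plan is to construct the bound $k$ explicitly from the geometry of the Coulomb cost. The key fact is that $c(x_1,\dots,x_N)$ is bounded as soon as the $N$ points are pairwise separated, so if we can place $x_2,\dots,x_N$ in sets on which $u$ is bounded below and which stay at positive distance from a generic $x_1$ (and from one another), then the constraint $u(x_1)+\dots+u(x_N)\le c$ turns into a uniform upper bound for $u(x_1)$.

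First I would build $N$ ``reservoir'' sets. Since $\rho$ is not concentrated on a set of cardinality $\le N-1$, it has either a nonatomic part or at least $N$ atoms, and in both cases there exist $N$ pairwise disjoint Borel sets $B_1,\dots,B_N$ with $\rho(B_i)>0$ (split the nonatomic part by a Lyapunov/Sierpi\'nski argument, or take $N$ atoms). Fix a finite-valued Borel representative of $u$; since $u\in L^1_\rho$ it is $\rho$-a.e.\ finite, so $B_i\cap\{u\ge -k\}\nearrow B_i$ and there is a constant $k^*$ with $\rho(B_i\cap\{u\ge -k^*\})>0$ for all $i$. By inner regularity of $\rho$ choose compact sets $L_i\subset B_i\cap\{u\ge -k^*\}$ with $\rho(L_i)>0$; these are pairwise disjoint compacta, so $d_0:=\min_{i\ne j}\mathrm{dist}(L_i,L_j)>0$, and $u\ge -k^*$ on each $L_i$.

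Then comes the covering step. Any $x\in\R^3$ is within distance $d_0/2$ of at most one $L_i$ (otherwise two of the $L_i$'s would be closer than $d_0$), hence at distance $\ge d_0/2$ from at least $N-1$ of them; so $\R^3$ can be partitioned into finitely many Borel sets $V_S$, indexed by the $(N-1)$-subsets $S$ of $\{1,\dots,N\}$, such that on $V_S$ every $L_i$ with $i\in S$ lies at distance $\ge d_0/2$ from $x_1$ (the maps $x_1\mapsto\mathrm{dist}(x_1,L_i)$ are continuous, so assigning each $x_1$ to the first admissible $S$ is Borel). Fix $S=\{i_1,\dots,i_{N-1}\}$ with $\rho(V_S)>0$. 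On $V_S\times L_{i_1}\times\dots\times L_{i_{N-1}}$, a set of positive $\rho^N$-measure, all $N$ coordinates are pairwise separated by at least $d_0/2$, hence $c\le\binom{N}{2}\,\frac{2}{d_0}=:M$ on it. Since $u(x_1)+\dots+u(x_N)\le c$ holds $\rho^N$-a.e., Fubini gives, for $\rho$-a.e.\ $x_1\in V_S$, at least one tuple $(x_2,\dots,x_N)\in L_{i_1}\times\dots\times L_{i_{N-1}}$ for which the inequality holds, whence $u(x_1)\le M-\sum_{j\ge 2}u(x_j)\le M+(N-1)k^*$ because $u\ge -k^*$ on each $L_{i_l}$. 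Taking the union over the finitely many $S$ yields $u\le M+(N-1)k^*$ $\rho$-a.e., which is the assertion with $k=M+(N-1)k^*$.

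The crux — and the reason a naive localization near a single tuple of points fails — is the interaction between the blow-up locus of $c$ and the support of $\rho$: for a generic $x_1$ there is no control of $c(x_1,\cdot)$ when the remaining variables approach $x_1$. This is handled by keeping $N$ reservoir sets rather than $N-1$, so that every $x_1$ can avoid one of them while still being matched with $N-1$ separated partners; the hypothesis $u\in L^1_\rho$ enters only through the passage to subsets on which $u$ is bounded below, which is exactly what makes $\sum_{j\ge 2}u(x_j)$ controlled uniformly in $x_1$ rather than merely finite.
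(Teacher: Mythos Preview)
Your proof is correct and follows essentially the same approach as the paper's: choose $N$ separated ``reservoirs'' on which $u$ is bounded below, observe that any point $x$ can be close to at most one of them, and use the remaining $N-1$ as partners for $x$ to bound $c$ and hence $u(x)$. The paper works with $N$ individual points $\underline{x}_1,\dots,\underline{x}_N$ (where $u$ is finite) and disjoint balls $B(\underline{x}_i,r)$ rather than compact sets $L_i$ of positive $\rho$-measure; your version handles the $\rho^N$-a.e.\ constraint a bit more carefully via Fubini, but the idea and the resulting bound are identical.
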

\begin{proof}
We have
$$u(x)\leq \rho- \inf_{x_2, \dots, x_N} \{c(x,x_2,\dots, x_N)-u(x_2)-\dots-u(x_N)\}, \ \ \rho-a.e. $$

Since $u \in L^1_\rho$ and $\rho$ has not $\le N-1$ atoms, we may consider $\underline{x}_1,\dots, \underline{x}_N$ with 
$\underline{x}_i\neq \underline{x}_j$ if $i \neq j$ such that $u(\underline{x}_i) \in \R$. We denote $l= \min\{u(\underline{x}_1), \dots, u(\underline{x}_N)\}$.
Consider $0<r$ such that $B(\underline{x}_i,r) \cap B(\underline{x}_j,r)=\emptyset$ if $i \neq j$.
Then any $x\in \R^3$ can belong to $B(\underline{x}_i,r)$ for at most one $i$. We may suppose without loss of generality that $i=1$.
It follows that
$$u(x) \leq c(x, \underline{x}_2,\dots, \underline{x}_N)- u(\underline{x}_2)- \dots- u(\underline{x}_N) \leq \frac{N(N-1)}{2} \frac{1}{r}-Nl:=k. $$
\end{proof}
\begin{rmk} In the previous proof $k$ clearly depends on $u$  through the constant $l$ and on $\rho$ through the constant $r$.
\end{rmk}

\begin{prop}\label{gammamassimi} The functionals 
$$ D(c_n, \cdot)\stackrel{\Gamma^+}{\rightarrow} D(c, \cdot),$$
with respect to the weak-$L^1$ convergence.
\end{prop}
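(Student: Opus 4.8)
The plan is to verify the two inequalities in the definition of $\Gamma^+$-convergence for $D(c_n,\cdot)$ with respect to weak-$L^1$ convergence. The $\Gamma^+$-liminf (recovery sequence) inequality is the easy direction: given $v \in L^1_\rho$ with $v(x_1)+\dots+v(x_N) \leq c(x_1,\dots,x_N)$ $\rho^N$-a.e., I would simply take the constant sequence $v_n = v$. Since $c_n \leq c$ this is not immediately admissible for $D(c_n,\cdot)$, so instead I would correct it: because $c_n \nearrow c$ pointwise and $c_n$ is l.s.c., the function $v$ need not satisfy the constraint for $c_n$, so I would take $v_n := v + \tfrac1N(\rho\text{-}\inf_{\bx}(c_n(\bx) - \sum_i v(x_i)))$, i.e. shift $v$ down by the (nonnegative after sign) constant that makes it admissible for $c_n$. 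One checks that this shift tends to $0$ as $n\to\infty$ (using $c_n\nearrow c$ and monotone convergence on the essential infimum, together with Lemma \ref{ubounded} to control integrability), so $v_n \to v$ in $L^1_\rho$ and $D(c_n,v_n) = N\int v_n\,d\rho \to N\int v\,d\rho = D(c,v)$, giving $\liminf_n D(c_n,v_n) \geq D(c,v)$.

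For the $\Gamma^+$-limsup inequality I must show that for every sequence $v_n \to v$ weakly in $L^1_\rho$ one has $\limsup_n D(c_n,v_n) \leq D(c,v)$. I may assume the $\limsup$ is finite and, passing to a subsequence, that it is an actual limit and that each $v_n$ is admissible for $c_n$ (otherwise $D(c_n,v_n) = -\infty$ and there is nothing to prove). Then $v_n(x_1)+\dots+v_n(x_N) \leq c_n(\bx) \leq c(\bx)$ $\rho^N$-a.e. The goal is to pass this constraint to the weak-$L^1$ limit: I want $v(x_1)+\dots+v(x_N) \leq c(\bx)$ $\rho^N$-a.e., because then $v$ is admissible for $D(c,\cdot)$ and weak-$L^1$ convergence gives $N\int v_n\,d\rho \to N\int v\,d\rho$, i.e. $\lim_n D(c_n,v_n) = D(c,v) \leq D(c,v)$.

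The main obstacle is precisely passing the pointwise (a.e.) constraint through weak-$L^1$ convergence, since weak convergence does not preserve pointwise inequalities in general. The standard device I would use is convexity/Mazur: the constraint set $\{w \in L^1_\rho : w(x_1)+\dots+w(x_N) \leq c_n(\bx)\ \rho^N\text{-a.e.}\}$ is convex and closed in $L^1_\rho$, hence weakly closed; but the cost varies with $n$, so I would first fix $m$ and note that for $n \geq m$, $c_n \geq c_m$, hence $v_n$ satisfies $v_n(x_1)+\dots+v_n(x_N) \leq c(\bx)$ (we only need the upper bound by $c$, which is $n$-independent). Thus all $v_n$ lie in the single convex set $\mathcal{C} = \{w : \sum_i w(x_i) \leq c(\bx)\ \rho^N\text{-a.e.}\}$, which is convex and strongly closed in $L^1_\rho$ (closedness uses that $L^1$ convergence has an a.e.-convergent subsequence), therefore weakly closed; so $v \in \mathcal{C}$, which is exactly admissibility of $v$ for $D(c,\cdot)$. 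Combined with the fact that weak-$L^1$ convergence $v_n \rightharpoonup v$ implies $\int v_n\,d\rho \to \int v\,d\rho$ (testing against the constant $1 \in L^\infty$), this yields $\limsup_n D(c_n,v_n) = \lim_n N\int v_n\,d\rho = N\int v\,d\rho = D(c,v)$, completing the limsup inequality and hence the proof.
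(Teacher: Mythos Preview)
Your $\Gamma^+$-limsup argument is correct and in fact more explicit than the paper's: the paper writes down the constraint $\sum u_n \leq c$ and the convergence of integrals but does not spell out why the constraint passes to the weak limit, which you justify cleanly via Mazur (the constraint set is convex and strongly $L^1$-closed, hence weakly closed).

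The $\Gamma^+$-liminf (recovery) argument, however, has a genuine gap. You set $v_n = v + \delta_n/N$ with $\delta_n = \rho^N\text{-}\mathrm{ess\,inf}(c_n - \sum_i v)$ and claim $\delta_n \to 0$ ``by monotone convergence on the essential infimum''. But essential infima do \emph{not} commute with pointwise monotone limits: for $f_n \nearrow f$ one only has $\sup_n \mathrm{ess\,inf}\,f_n \le \mathrm{ess\,inf}\,f$, and the inequality can be strict. In the present setting this actually occurs: the paper's $c_n$ vanish identically outside the cube $\bQ(n)$, so if $\rho$ has unbounded support and $v>0$ on a set of positive $\rho$-measure extending to infinity (which is allowed for admissible $v$), then for every $n$ one finds a set of positive $\rho^N$-measure in $\bQ(n)^c$ on which $c_n - \sum_i v = -\sum_i v < 0$, forcing $\delta_n \le -\eta < 0$ for all $n$. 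Then $v_n \not\to v$ and the recovery fails. A single global shift is too crude to absorb the defect of $c_n$ on the shrinking but always non-null bad set.

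The paper builds the recovery sequence differently, and this is the missing idea: given $\varepsilon>0$, use Lusin to find a compact $K$ with $u|_K$ continuous and $\rho(K^c)<\varepsilon$; on the compact $K^N$ a Dini-type argument (using $c_n \nearrow c$ and lower semicontinuity) gives $\sum_i u - \varepsilon < c_n$ for all large $n$; then set $\bar u = u - \varepsilon/N - N(\sup u)\chi_{K^c}$. This $\bar u$ is admissible for $c_n$ (on $K^N$ by Dini, off $K^N$ because the large subtraction drives $\sum \bar u$ below $0 \le c_n$) and $N\int \bar u\,d\rho \ge D(c,u) - C\varepsilon$. Diagonalizing over $\varepsilon \to 0$ yields $u_n \to u$ in $L^1_\rho$ with $\liminf D(c_n,u_n) \ge D(c,u)$. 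The key is to localize to a compact set where one has the needed uniformity, and to pay a controllable $L^1$-price on its small complement.
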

\begin{proof}
We start with the $\Gamma^+-\liminf$ inequality.
Let $u\in L^1_\rho$ such that  $-\infty<D(c,u)$
then 
$$ u(x_1)+\dots+ u(x_N) \leq c (x_1, \dots, x_N), \ \ \rho^N-a.e. $$
and by Lemma \ref{ubounded} $u$ is bounded above. For an arbitrary  $0<\varepsilon$ there holds
$$ D(c,u) -\varepsilon < N \int_{\R^3} u d \rho. $$
Moreover by Lusin's theorem there exists $K \subset \R^3$ compact such that
$u_{|K}$ is continuous, $\rho(\R^3 \setminus K)<\varepsilon$ and 
$$ D(c,u)-\varepsilon < N\int_K u d \rho.$$
Since $c_n \nearrow c$ and are l.s.c we may apply  a Dini's type theorem and we obtain that there exists $n_0$ such that if $n_0 <n$ then
\begin{equation}\label{closeonk}
u(x_1)+ \dots+ u(x_N) -\varepsilon < c_n (x_1, \dots, x_N), \ \ \rho^N-a.e. 
\end{equation} 
on $K \times \dots \times K$.
We consider
$$ \overline{u}(x):= u(x)-\frac{\varepsilon}{N}-N (\sup u) \chi_{\R^3 \setminus K}(x).$$
Since $0 \leq c_n$ and (\ref{closeonk}) holds we have, for $n_0 <n$,
$$ \overline{u}(x_1)+ \dots + \overline{u}(x_N)\leq c_n(x_1, \dots,x_N), \ \ \rho^N-a.e. $$
Moreover if $M\in \N$ and $N^2 \sup u<M$ then
\begin{multline}
D(c,u)-(2+M)\varepsilon < N \int _{\R^3} u d \rho -\varepsilon -N^2\varepsilon \sup u \leq  N \int _{\R^3} u d \rho -\varepsilon -N^2\rho(\R^3 \setminus K) \sup u= \\
= N \int _{\R^3} \overline{u}d \rho.
\end{multline}
For what concernes the $\Gamma^+-\limsup$ inequality assume that 
$u_n \rightharpoonup u$ in $L^1_\rho$ and assume without loss of generality that
$$u_n(x_1)+ \dots+ u_n(x_N)  \leq c_n(x_1, \dots, x_N), \ \ \rho^N-a.e. $$
Then 
$$u_n(x_1)+ \dots+ u_n(x_N)  \leq c(x_1, \dots, x_N) $$
and 
$$ \lim_{n \to \infty} N\int u_n d \rho =N\int u d \rho.$$  
\end{proof}

In the next  subsection we will prove the needed compactness property.
\subsection{Estimates of the approximating  Kantorovich potentials and conclusions}\label{comp}

\begin{lmm}\label{maxcompatti} For all $n \in \N$ 
\begin{equation}\label{dualn}
\max D(c_n, \cdot)= \min K(c_n, \cdot).
\end{equation}
Moreover  $D(c_n, \cdot)$ admits a maximizer 
$(u^n_1, \dots, u^n_N)$
\end{lmm}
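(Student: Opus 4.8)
The plan is to obtain \eqref{dualn} by working entirely at the discrete/elementary level, where the cost $c_n$ is a finite sum $\sum_{i=1}^{k_n} a_i \chi_{A^1_i}(x_1)\cdots\chi_{A^N_i}(x_N)$ and, by the Remark, the sets $A^k_i$ for fixed $k$ form (essentially) a finite partition of $\R^3$ into the Borel blocks $E^k_1,\dots,E^k_{m}$. Since $c_n$ is constant on each product cell $E^1_{j_1}\times\cdots\times E^N_{j_N}$, the problem $\min K(c_n,\cdot)$ is really a finite-dimensional linear program: a measure $P\in\Pi(\rho)$ only sees the numbers $p_{j_1\dots j_N}=P(E^1_{j_1}\times\cdots\times E^N_{j_N})\ge 0$, subject to the finitely many marginal constraints $\sum p_{\dots} = \rho(E^k_{j})$ over the appropriate index sets. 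Likewise a competitor $v$ in $D(c_n,\cdot)$ may be taken constant on each $E^k_j$ without decreasing $N\int v\,d\rho$ (replace $v$ on each block by its $\rho$-essential infimum over that block; the constraint $v(x_1)+\dots+v(x_N)\le c_n$ is preserved because $c_n$ is cellwise constant and only atoms of $\rho$ matter, noting $\rho(E^k_j)>0$ on the relevant blocks), so the dual problem is the LP dual of the primal LP.

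First I would make these reductions precise: show that the restriction map $P\mapsto (p_{j_1\dots j_N})$ is a continuous affine surjection from the (nonempty, $w^*$-compact) set $\Pi(\rho)$ onto the compact polytope of nonnegative arrays with the prescribed marginals, so $\min K(c_n,\cdot)$ is attained and equals the minimum of the linear functional $\sum a(j_1,\dots,j_N)\,p_{j_1\dots j_N}$ over that polytope — a genuinely finite LP. Then I would invoke strong LP duality (the primal polytope is nonempty and the objective is bounded below by $0$) to get that this minimum equals $\max \sum_{k,j}\rho(E^k_j)\,\lambda^k_j$ over dual vectors $(\lambda^k_j)$ satisfying $\lambda^1_{j_1}+\dots+\lambda^N_{j_N}\le a(j_1,\dots,j_N)$ for all cells. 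Finally I would lift an optimal dual vector back to functions: set $v^n_k = \sum_j \lambda^k_j \chi_{E^k_j}$, which lies in $L^1_\rho$ (bounded), satisfies $v^n_1(x_1)+\dots+v^n_N(x_N)\le c_n(\bx)$ pointwise on the full-measure set where $\bx$ lies in some cell, and realizes $\sum_k\int v^n_k\,d\rho = \max\sum\rho(E^k_j)\lambda^k_j = \min K(c_n,\cdot)$. Since weak duality $D(c_n,\cdot)\le K(c_n,\cdot)$ always holds (integrate the constraint against any $P\in\Pi(\rho)$), this $(v^n_1,\dots,v^n_N)$ is a maximizer and \eqref{dualn} follows; symmetrizing via $u^n=\frac1N\sum_k v^n_k$ if one wants the single-potential form.

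The main obstacle I anticipate is not the LP duality itself but the two measure-theoretic bookkeeping points at the interface: (i) checking that the blocks $E^k_j$ can indeed be taken to partition $\R^3$ up to $\rho$-null sets and that only blocks of positive $\rho$-measure enter the constraints, so that "$\rho^N$-a.e." on cells is equivalent to the finitely many cell inequalities — here one must be slightly careful because $c_n$ vanishes outside $\bQ(n)$, so the block structure outside a large cube is the single set $\R^3\setminus(\text{cube})$ with cost contribution $0$, which is harmless; and (ii) justifying that passing from an arbitrary admissible $v$ to its cellwise essential infimum neither breaks the constraint nor lowers $\int v\,d\rho$ — this is where one uses that the constraint is $\rho^N$-a.e. and that a product of positive-$\rho$-measure blocks has positive $\rho^N$-measure, so the essential infima add up correctly. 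Once these are nailed down, everything else is the standard finite-dimensional transport LP and its dual.
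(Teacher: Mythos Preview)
Your proposal is correct and follows essentially the same route as the paper: reduce $\min K(c_n,\cdot)$ to a finite linear program via the block structure of $c_n$ (the paper phrases this as a push-forward to finite sets $X_1,\dots,X_N$ through maps $\varphi_i$), invoke LP strong duality, and lift the optimal dual vector back to elementary functions in $L^1_\rho$. One small remark: the step where you replace an admissible $v$ by its cellwise $\rho$-essential infimum actually \emph{decreases} $N\int v\,d\rho$, so it does not by itself show that the full dual reduces to the finite LP dual---but this is harmless, since you (correctly) close the equality at the end via weak duality $D(c_n,\cdot)\le K(c_n,\cdot)$ together with the lifted LP maximizer.
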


\begin{proof} The proof is revisited from \cite{kellerer1984}.\\
Since we have written $c_n(\bx) = \sum_{i=1}^{k_n} a_i \chi_{A^1_i}(x_1) \dots  \chi_{A^N_i}(x_N) $ we may find sets $X_1, \dots, X_N$ each of $k_n+1$ elements,  
$$ \varphi_i:\R^3 \to X_i$$
and 
$$\tilde{c}_n:X_1 \times \dots\times X_N \to \R$$
such that
$$c_n(\bx)=\tilde{c}_n (\varphi_1 (x_1), \dots, \varphi_N(x_N)).$$
If we define $\varphi:\R^{3N} \to X_1 \times \dots\times X_N$ by $\varphi=\otimes_{i=1}^N \varphi_i$ 
and $\rho_i=\varphi_{i\ \sharp} \rho \in \Pb(X_i) $, then we have that for all $\gamma \in \Pi(\rho)$ we may consider $\tilde{\gamma}:= \varphi_\sharp \gamma \in \Pi (\rho_1, \dots, \rho_N)$ and
$$ \int_{\R^{3N}} c_n d \gamma= \int_{\R^{3N}} \tilde{c}_n(\varphi(\bx)) d \gamma=  \int_{X_1 \times \dots \times X_N} \tilde{c}_n d \tilde{\gamma}.$$  
We remark that since $\tilde{c}_n$ and $\tilde{\gamma}_n$ may be identified with elements of $\R^{k_n +1} \times \cdots \times \R^{k_n+1}$ and $\rho_i$ 
with an element of $\R^{k_n+1}$, the problem may be reformulated as follows:
\begin{equation}
\left\{\begin{array}{l}
\min \tilde{c}\cdot \tilde{\gamma}, \\
P_i \tilde{\gamma}=\rho_i,\\
0 \leq \tilde{\gamma},
\end{array}
\right.
\end{equation}
where the $P_i$ form the projection matrix.
Thus the problem is a linear programming problem whose minimum value coincides with the maximum value of the dual problem
\begin{equation}
\left\{\begin{array}{l}
\max \tilde{\rho}^T\cdot \tilde{u}, \\
P^T \tilde{u}\leq \tilde{c}^T.
\end{array}
\right.
\end{equation}
It remains to identify $\tilde{u}$ with a $N$-tuple $(u_1,\dots,u_N)$ of {\it elementary} functions in ${\mathcal E}(\R^{3N})$
which maximises 
\begin{equation}\label{varmax}
 \max \{\sum_{i=1}^N \int u_i d\rho \ : \ u_i \in L^1_\rho, \ u_1(x_1)+ \dots +u_N(x_N) \leq c_n (x_1, \dots, x_N)\}. 
 \end{equation}
\end{proof}

We now prove a first property of minimiser $P$ of $K(c, \cdot)$. This requires additional assumptions on $\rho$.

Since the pointwise transportation cost diverges when two ($3-$dimensional) coordinates  get too close it is useful 
to introduce the following notations:
$$D_\alpha :=\{\bx=(x_1, \dots, x_N) \ : \ |x_i-x_j| \leq \alpha\ \mbox{for some}\ i,j \},$$
will be the closed strip around the diagonals, 
$$G:=\{\bx=(x_1, \dots, x_N) \ : \ x_i\not =x_j\ \mbox{if}\ i\not = j \}.$$
For simplicity we pose $D_0=D$. Then $G=\R^{3N}\setminus D$. Finally 
we introduce a notation for a "cube" with rounds $3-$dimensional faces 
$$Q(\bx, r):=\{\by=(y_1, \dots, y_N) \ : \  \max_i |y_i-x_i| <r\},$$
we will omit the center $\bx$ when it is the origin.

\begin{prop}\label{diagcube}
Assume that $\rho$ does not have atoms. Let $P\in \Pi(\rho)$  be a minimizer.
Then, for all $r>0$ there exists $\alpha (r)$ such that 
$$ P(D_{\alpha(r)} \cap Q(r))=0$$
\end{prop}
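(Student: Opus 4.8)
The plan is to argue by contradiction: suppose that for some fixed $r>0$ we have $P(D_\alpha \cap Q(r))>0$ for every $\alpha>0$. Since $D_\alpha \cap Q(r) \searrow D \cap Q(r)$ as $\alpha \searrow 0$, this would force $P(D \cap Q(r)) = \lim_{\alpha \to 0} P(D_\alpha \cap Q(r)) \geq \delta > 0$ for some $\delta$. The first thing to observe is that $P(D \cap Q(r)) = 0$ is essentially automatic from optimality together with finiteness of the minimum: on $D$ the cost $c$ equals $+\infty$ (two coordinates coincide), so if $P$ gave positive mass to $D$ the integral $\int c\, dP$ would be $+\infty$, contradicting that $P$ is a minimizer and that the minimum is finite (which holds under the standing assumption that $\rho$ is not concentrated on $\leq N-1$ points; one should note this assumption must also be invoked here so that $\min K(c,\cdot)<+\infty$). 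So in fact $P(D)=0$, and the real content is quantitative: we must show that the mass $P(D_\alpha \cap Q(r))$ can be made small uniformly, i.e. that a single $\alpha(r)$ makes it exactly $0$, not just small.

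To get the sharp statement "$=0$" rather than "$\to 0$", I would use a competitor/exchange argument exploiting that $\rho$ is nonatomic. The idea: if $P$ puts positive mass on $D_\alpha \cap Q(r)$ for all small $\alpha$, take a small $\alpha$, restrict $P$ to the "bad" set $B_\alpha := D_\alpha \cap Q(r)$, and modify $P$ there to push the nearly-colliding coordinates apart, decreasing the cost. Concretely, on $B_\alpha$ at least two coordinates $x_i,x_j$ satisfy $|x_i-x_j|\le\alpha$, so $c \geq 1/\alpha$ there, which is huge; meanwhile, since $\rho$ is nonatomic one can find a Borel map (or a measurable selection of couplings) that relocates mass of the offending marginal(s) to a region bounded away from the diagonal but still compatible with the marginal constraint $\pi^i_\sharp P = \rho$ — the nonatomicity is exactly what lets us "spread out" the marginal without creating new concentration. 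The rearranged plan $P'$ still lies in $\Pi(\rho)$ and has $\int c\, dP' < \int c\, dP$ once $\alpha$ is small enough that $1/\alpha$ dominates the (bounded, since we stay in a controlled region) cost of the relocated configuration, contradicting minimality. Hence for some threshold $\alpha(r)$ the set $B_{\alpha(r)}$ must already be $P$-null.

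The main obstacle is making the rearrangement rigorous while respecting \emph{all} $N$ marginal constraints simultaneously: moving $x_i$ away from $x_j$ changes the $i$-th marginal of $P$, so one cannot simply translate one coordinate. The clean way is to disintegrate $P$ with respect to the coordinate(s) not being moved, and on the conditional measures perform a measure-preserving rearrangement of the remaining coordinates supported away from the diagonal strip — here one uses that $\rho$ nonatomic implies the relevant conditional marginals are nonatomic on a set of full measure, so such rearrangements exist (e.g. by a Borel isomorphism with Lebesgue measure on $[0,1]$, or by appealing to the gluing lemma). An alternative, and probably what the author does, is a purely measure-theoretic argument: combine the nonatomicity of $\rho$ with the fact that $\rho^N(D)=0$ to deduce directly that any $P\in\Pi(\rho)$ with finite cost satisfies $P(D)=0$, and then promote this to the uniform statement on $Q(r)$ by a compactness/continuity argument in $\alpha$, using that $Q(r)$ is bounded so that on $Q(r)\setminus D_\alpha$ the cost is bounded by $\tfrac{N(N-1)}{2}\cdot\tfrac1\alpha$ and monotone convergence controls the tail $P(D_\alpha\cap Q(r))$. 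I would pursue the contradiction-plus-competitor route first, since it most directly yields the stated $P(D_{\alpha(r)}\cap Q(r))=0$.
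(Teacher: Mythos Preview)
Your general strategy---build a competitor to contradict minimality---matches the paper's, but the execution has a real gap. The rearrangement you sketch (``disintegrate with respect to the coordinates not being moved and perform a measure-preserving rearrangement of the remaining ones'') does not preserve the marginal constraints: the $i$-th marginal of $P$ is obtained by integrating the conditionals against the law of the \emph{other} coordinates, so reshuffling fibrewise alters $\pi^i_\sharp P$ unless the rearrangement is globally coordinated in a way you never specify. Nonatomicity of $\rho$ tells you nothing about the conditional measures of $P$. Your alternative ``purely measure-theoretic'' route only yields $P(D_\alpha\cap Q(r))\to 0$, which you correctly flagged as insufficient for the sharp conclusion.

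The paper's argument is more structural and supplies precisely the missing construction. It proves the stronger fact that $D\cap spt\,P=\emptyset$; the conclusion then follows because $D$ and $spt\,P$ are both closed, hence at positive distance inside the compact $\overline{Q(r)}$. To show the support avoids $D$, one argues by contradiction: assuming $\bx^1\in D\cap spt\,P$, one first uses a short lemma (relying on nonatomicity of $\rho$) to pick auxiliary points $\bx^2,\dots,\bx^N\in spt\,P$ whose $N^2$ three-dimensional coordinates are pairwise distinct. One then restricts $P$ to small cubes $Q(\bx^i,r_i)$, rescales so the restrictions $\lambda_iP_i$ have equal mass, and builds a competitor by \emph{cyclically permuting} the marginals $\nu^i_k$ of these $N$ local pieces and forming product measures $\tilde P_i=\nu^i_1\otimes\nu^{i+1}_2\otimes\cdots\otimes\nu^{i+N-1}_N$. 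This cyclic permutation is exactly what guarantees all $N$ marginals are preserved simultaneously---the device your sketch lacks. The competitor is strictly cheaper because the cost of $P$ near $\bx^1\in D$ blows up as $r_1\to 0$, while the cost of the permuted pieces stays bounded (all coordinates involved are distinct). This contradicts minimality, so $spt\,P$ misses $D$, and compactness inside $Q(r)$ finishes the proof.
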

We first prove an elementary lemma
\begin{lmm}\label{punti} Assume that $\rho$ does not have atoms. Let $P\in \Pi(\rho)$be a plan with finite cost and let $\bx \in spt P$. 
Then there exist $\bx^2, \dots, \bx^N\in spt P$ such that 
\begin{enumerate}
\item $\bx^2, \dots, \bx^N\in G$,
\item $x^i_j \not = x^k_\sigma$  if $k \neq i$ or $\sigma \neq j$.
\end{enumerate}
\end{lmm}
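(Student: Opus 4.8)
The plan is to build $\bx^2,\dots,\bx^N$ one at a time by a measure‑theoretic exhaustion: at each step the configurations we must avoid form a $P$‑negligible Borel set, while $\operatorname{spt}P$ carries all of the mass of $P$, so a legal choice always survives inside the support.

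First I would record the two sources of $P$‑null sets. Since $P$ has finite cost and $c\equiv+\infty$ exactly on the diagonal set $D$, we get $P(D)=0$, equivalently $P(G)=1$; this already takes care of requirement $(1)$. Since each marginal is $\pi^i_\sharp P=\rho$ and $\rho$ is atomless, for every $a\in\R^3$ and every $j$ the coordinate hyperplane satisfies $P(\{\by:y_j=a\})=\rho(\{a\})=0$; hence for any finite set $F\subset\R^3$ the closed set $\bigcup_{j=1}^N\{\by:y_j\in F\}$ is $P$‑null. Finally recall that for a Borel probability measure on the Polish space $\R^{3N}$ one has $P(spt\,P)=1$.

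Now the induction. Suppose $\bx^2,\dots,\bx^{m-1}\in spt\,P$ have been chosen (for $m=2$ nothing has been chosen yet). Let $F_{m-1}:=\{x_1,\dots,x_N\}\cup\bigcup_{i=2}^{m-1}\{x^i_1,\dots,x^i_N\}$, a finite subset of $\R^3$, and put $B_m:=D\cup\bigcup_{j=1}^N\{\by:y_j\in F_{m-1}\}$. By the previous paragraph $B_m$ is Borel with $P(B_m)=0$, so $P\big(spt\,P\setminus B_m\big)=1>0$ and in particular $spt\,P\setminus B_m\neq\emptyset$; choose $\bx^m$ in it. Then $\bx^m\notin D$ gives $\bx^m\in G$, and $x^m_j\notin F_{m-1}$ gives $x^m_j\neq x^i_\sigma$ for every $i<m$ and every $\sigma$, while $\bx^m\in G$ gives $x^m_j\neq x^m_{j'}$ for $j\neq j'$. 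After $N-1$ steps the points $\bx^2,\dots,\bx^N$ satisfy $(1)$; and, writing $\bx^1:=\bx$ for notational uniformity, they satisfy $x^i_j\neq x^k_\sigma$ whenever $(i,j)\neq(k,\sigma)$ with $i,k\in\{2,\dots,N\}$, each of their $3$‑dimensional coordinates being moreover distinct from every coordinate of $\bx$. This is $(2)$.

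There is no genuine obstacle here; the only points needing care are that $B_m$ is really Borel and $P$‑null — a finite union of the closed set $D$ and finitely many coordinate hyperplanes, using atomlessness of $\rho$ — and the elementary but essential observation that a $P$‑null set cannot swallow any positive‑measure portion of $spt\,P$, so a point of the support avoiding $B_m$ must exist. Atomlessness of $\rho$ is used precisely to kill the hyperplanes, and finiteness of the cost only to kill $D$.
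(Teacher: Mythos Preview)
Your argument is correct and follows the same route as the paper: both proofs first record that $P(D)=0$ from finite cost and that coordinate hyperplanes $\{\by:y_j=a\}$ are $P$-null by atomlessness of $\rho$, and then iteratively pick $\bx^m\in\operatorname{spt}P$ outside the finite union of these null sets. Your version is slightly more explicit (you spell out $P(\operatorname{spt}P)=1$ and the Borel/Polish details), but the structure and the key observations are identical.
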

\begin{proof}
First remark that if $P$ has finite cost then $P(D)=0$ and then $P$-a.a. points belong to $G$.
Then fix a vector $\overline{a}\in \R^3$ and consider the set $X^i_{\overline{a}}=\{\bx \in \R^{3N}\ : \ x_i=\overline{a}\}.$
By definition of marginals and since $\rho$ does not have atoms
$$ P(X^i_{\overline{a}})=\rho(\{\overline{a}\})=0.$$ 
Then, starting from $\bx^1=\bx$ we may choose 
$$\bx^j \in spt P \setminus (D \cup (\cup_{k < j} \cup_{\stackrel{i=1,\dots, N}{\sigma=1,\dots, N}} X^i_{x^k_\sigma})).$$ 
\end{proof}

\begin{proof}(of Proposition \ref{diagcube})
  Assume that $\bx^1=(x^1_1,\dots, x^1_N) \in D \cap spt P$ and choose points $\bx^2,\dots, \bx^N$ in $spt P$ as in Lemma 
\ref{punti}. For all choices of positive radii $r_1 \dots, r_N$ 
$$ P(Q(\bx^i, r_i))>0.$$
And later on we will choose suitable $r_i$'s. Denote by $P_i= P_{|Q(x^i, r_i)}$ and choose constants $\lambda_i\in (0, 1]$ such that
$$ \lambda_1 |P_1| = \dots = \lambda_N |P_N|$$
We then write
$$P= \lambda_1 P_1+\dots+ \lambda_N P_N  + P_R \ \ \ (P_R \ \mbox{is the remainder}).$$
We estimate from below the cost of $P$ as follows
\begin{eqnarray*}
C(P)&=& C(P_R)+ \sum_{i=1}^N \lambda_i C(P_i) \geq \\
& \ &  C(P_R)+\sum_{i=1}^N \lambda_i (\sum _{k=1}^N \sum _{k<j}\frac{1}{|x^i_k-x^i_j|+2r_i})|P_i|.
\end{eqnarray*}
Consider now the marginals $\nu^i_1, \dots, \nu^i_N$ of $\lambda_i P_i$ and build the new local plans
$$\tilde{P}_1=\nu_1^1 \times \nu_2^2\times \dots \nu^N_N, \ \tilde{P}_2=\nu_1^2 \times \nu_2^3\times \dots \nu_N^1, \dots, 
 \tilde{P}_N=\nu_1^N \times \nu_2^1\times \dots \nu_N^{N-1}.$$
To write the estimates from below it is convenient to remark that we may also write: 
$\tilde{P}_i=\nu_1^i \times \dots \nu_k^{i+k-1}\times \dots \nu_N^{i+N-1}$ where we consider the lower index $(\mbox{mod}\ N)$.
Then consider the new transport plan
$$\tilde{P}:= P_R+\tilde{P}_1+ \dots + \tilde{P}_N .$$
It is straightforward to check that the marginals of $\tilde{P}$ are the same as the marginals of $P$.
Moreover $|\tilde{P}_i|= \lambda_i |P_i|$. So we can estimate the cost of $\tilde{P}$ from above. 
\begin{eqnarray*}
C(\tilde{P})&=& C(P_R)+ \sum_{i=1}^N C(\tilde{P}_i) \leq \\
& \ &  C(P_R)+\sum_{i=1}^N  (\sum _{k=1}^N \sum _{k<j}\frac{1}{|x^k_{k+i-1}-x^j_{j+i-1}|-r_k-r_j})|\tilde{P}_i|.
\end{eqnarray*}
The final step is to choose $r_i$ for $i=1, \dots, N$ so that the $3-$dimensional faces of the cubes $Q(\bx^i, r_i)$ do not overlap and 
$$\sum_{i=1}^N  (\sum _{k=1}^N \sum _{k<j}\frac{1}{|x^k_{k+i-1}-x^j_{j+i-1}|-r_k-r_j})<\sum_{i=1}^N (\sum _{k=1}^N \sum _{k<j}\frac{1}{|x^i_k-x^i_j|+2r_i}). $$
This final condition contradicts the minimality of $P$ and it is feasible because, since $x^1_i=x^1_j$ 
for some $i$ and $j$ the right hand side is unbounded for $r_1 \to 0$ while the left hand side is bounded above for 
$r_i$ sufficiently small.

It follows that the diagonal $D$ and the  $spt P$ do not intersect and since both sets are compact inside $\overline{Q(r)}$, they must have positive distance in $Q(r)$ 
\end{proof}

Let $P_n$ denote a sequence of minimisers of $K(c_n,\cdot)$ converging to a minimiser $P$ of $K(c, \cdot)$.
We choose $0<\rc$ such that $P(Q(\rc)) =M>0$ and we consider $\alpha(\rc)$ according to Proposition \ref{diagcube} above. 
Since $Q(\rc)$ is open  we have, 
$$ \liminf_{n \to \infty} P_n (Q(\rc)) \geq M  $$ 
and then for $n$ big enough $P_n (Q(\rc)) \geq \frac{M}{2}. $
Since  $D_{\alpha(\rc)} \cap \overline{Q(\rc)}$ is closed we have 
$$ \lim_{n \to \infty } P_n(D_{\alpha(\rc)} \cap \overline{Q(\rc)})=0.$$
Then for $n$ big enough
\begin{equation}\label{bign}
 \frac{M}{4} <  P_n (Q(\rc) \setminus D_{\alpha(\rc)}). 
\end{equation}
\begin{prop} \label{controln} Assume that $\rho$ does not have atoms and let $n$ satisfies (\ref{bign}) above,  then there exists a maximiser $u$  of $D(c_n, \cdot)$ and two positive constants  $\rn$ and $\kn$ such that
$$ |u| \leq \frac{2N(N-1)^2}{\rn}- (N-1)^2 \kn.$$
\end{prop}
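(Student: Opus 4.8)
The plan is to read the maximiser off the finite linear program used in the proof of Lemma~\ref{maxcompatti}: its optimal dual vector gives an \emph{elementary} potential $u$, which is therefore bounded, and the content of the proposition is to make this bound explicit (and ultimately uniform in the large $n$). Two ingredients feed the estimate. First, complementary slackness between the finite primal and dual programs: choosing for $P_n$ a minimiser of $K(c_n,\cdot)$ which is moreover permutation invariant, and $u$ an optimal potential, one has $u(x_1)+\dots+u(x_N)=c_n(\bx)$ for $P_n$-a.e.\ $\bx$. Second, the quantitative non-concentration (\ref{bign}), namely $P_n\big(Q(\rc)\setminus D_{\alpha(\rc)}\big)>M/4>0$. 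Intersecting this with the full-$P_n$-measure set on which complementary slackness holds, I choose a point $\bar\bx=(\bar x_1,\dots,\bar x_N)\in\mathrm{spt}\,P_n$ whose three-dimensional coordinates are pairwise at distance $>\alpha(\rc)$ and which satisfies
$$0\ \le\ u(\bar x_1)+\dots+u(\bar x_N)\ =\ c_n(\bar\bx)\ \le\ \frac{N(N-1)}{2\,\alpha(\rc)}.$$
I then take $\rn$ to be a separation radius built from $\alpha(\rc)$, so that balls of radius $\rn$ about the $\bar x_i$ are pairwise disjoint, and $\kn$ to be the (finite) constant $\min_i u(\bar x_i)$ controlling the values of $u$ at those $N$ separated points.

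For the \emph{upper} estimate, fix $x\in\R^3$. Since the $\bar x_i$ are uniformly separated, $x$ lies in at most one ball $B(\bar x_i,\rn)$; putting $x$ into the corresponding slot of $\bar\bx$ yields a configuration $\by$ all of whose $3$-dimensional pairwise distances are bounded below by a fixed multiple of $\rn$, so a crude count of the $\tfrac{N(N-1)}{2}$ terms of $c$ gives $c(\by)\le \tfrac{2N(N-1)}{\rn}$. Feeding $\by$ into the feasibility constraint $u(y_1)+\dots+u(y_N)\le c_n(\by)\le c(\by)$ and using the displayed identity together with $c_n(\bar\bx)\ge\max_i u(\bar x_i)+(N-1)\kn$ to deduce $\sum_{j\ne i(x)}u(\bar x_j)\ge(N-1)\kn$, one arrives at
$$u(x)\ \le\ \frac{2N(N-1)}{\rn}-(N-1)\kn\qquad\text{for every }x,$$
hence the same bound on $\sup u$.

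For the \emph{lower} estimate I use complementary slackness in the other direction: if $\by\in\mathrm{spt}\,P_n$ lies in the set where it holds, then $u(y_1)=c_n(\by)-\sum_{j\ge2}u(y_j)\ge-(N-1)\sup u$ because $c_n\ge0$; and $\rho$-a.e.\ point of $\R^3$ occurs as a first coordinate of such a $\by$, since $\rho$ is the first marginal of $P_n$. Thus $\inf u\ge-(N-1)\sup u$ $\rho$-a.e., and as we may assume $\sup u>0$ (otherwise $u\equiv0$, because $N\int u\,d\rho=\min K(c_n,\cdot)\ge0$), we get $|u|\le(N-1)\sup u$ $\rho$-a.e. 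Substituting the upper estimate (and passing, if one has worked with the asymmetric tuple, to $\tfrac1N\sum_i u_i$, which obeys the same bound) gives
$$|u|\ \le\ (N-1)\left(\frac{2N(N-1)}{\rn}-(N-1)\kn\right)=\frac{2N(N-1)^2}{\rn}-(N-1)^2\kn .$$

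The main obstacle is that the only place where the hypotheses genuinely pin down the values of $u$ is the single tight point $\bar\bx$, so both halves of the estimate must be funnelled through it; this is what forces the slot-selection device above and creates the apparent interlocking ("upper bound for $u$ involves $u(\bar x_{i(x)})$", "lower bound involves $\sup u$") which is dissolved only by keeping $\kn$ as an explicit quantity and by the factor $N-1$ linking $\inf u$ to $\sup u$. One then still has to verify that $\kn$ is itself controlled — from above by $\min_i u(\bar x_i)\le\tfrac1N c_n(\bar\bx)\le\tfrac{N-1}{2N\alpha(\rc)}$, and from below through the lower estimate — which is the genuinely delicate part of making the bound uniform in $n$. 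A secondary, technical point is that feasibility and complementary slackness are \emph{a.e.}\ statements, whereas Step~2 evaluates the constraint at the specific configuration $\by$; this is handled by staying inside the finite program of Lemma~\ref{maxcompatti}, where the dual inequalities hold for \emph{all} index tuples, or by a small perturbation argument using that $c_n$ is lower semicontinuous and $\rho$ is atomless (the hypothesis of the proposition, also needed to invoke Proposition~\ref{diagcube}).
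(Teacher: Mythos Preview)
Your slot-selection device is appealing but does not close the estimate. With a single symmetric potential $u$ the inequality $\sum_{j\neq i(x)}u(\bar x_j)\ge (N-1)\kn$ holds only with $\kn:=\min_i u(\bar x_i)$, and nothing in the argument forces this quantity to be positive (as the statement requires) or bounded below independently of $u$. Your own bootstrap --- feed the lower estimate $\inf u\ge -(N-1)\sup u$ back into $\sup u\le 2N(N-1)/\rn-(N-1)\kn$ --- yields only $\sup u\le 2N(N-1)/\rn+(N-1)^2\sup u$, which is vacuous for $N\ge 2$. You correctly flag the lower control of $\kn$ as ``the genuinely delicate part'' but do not carry it out; that is precisely where the argument breaks.

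The paper escapes the circularity by two moves you are missing. First, it works with the \emph{asymmetric} maximiser $(u_1,\dots,u_N)$ of (\ref{varmax}) and uses the freedom to add constants summing to zero to normalise $u_i(\overline{x}_i)=c(\overline{x}_1,\dots,\overline{x}_N)/N=:\kn$ for every $i$; this $\kn$ is then manifestly positive and determined by the geometry of the chosen support point alone. The price is the loss of slot selection: to bound $u_1(x)$ one must keep $x$ in the first slot, and when $x$ lies near some $\overline{x}_j$ with $j\ge 2$ the cost $c(x,\overline{x}_2,\dots,\overline{x}_N)$ is uncontrolled. Second, and for this reason, the paper introduces a \emph{second} support point $(\overline{y}_1,\dots,\overline{y}_N)\in\mathrm{spt}\,P_n\cap Q(\rc)\setminus D_{\alpha(\rc)}$ with every $\overline{y}_i$ at distance $\ge\rn$ from every $\overline{x}_j$, found by shrinking $\rn$ so that $\sum_i\rho(B(\overline{x}_i,\rn))\ll M/4$ and invoking~(\ref{bign}). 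Complementary slackness at this second point, combined with the first-step upper bound on $u_1(\overline{y}_1)$, gives a lower bound on $\sum_{j\ge 2}u_j(\overline{y}_j)$, and feasibility at $(x,\overline{y}_2,\dots,\overline{y}_N)$ then handles the remaining $x$. Neither the normalisation nor the second test point has a counterpart in your argument, and at least one of them is needed to produce a $\kn$ that is both positive and stable as $n\to\infty$.
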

We will later show that we can control $\rn$ and $\kn$ uniformly in $n$.
\begin{proof}  Consider a point $(\overline{x}_1, \dots , \overline{x}_N) \in spt P_n \setminus D_{\alpha(\rc)}$ and in $Q(\rc)$.
 
We start from a maximiser  $(u_1, \dots, u_N)$ of (\ref{varmax})  and we remark that without loss of generality we may assume that 
$\displaystyle u_i(\overline{x}_i)=\frac{c (\overline{x}_1, \dots , \overline{x}_N) }{N}:=\kn$ for all $i$  and that
$$ u_i(x)= \inf \{ c(y_1, \dots, x, y_{i+1} \dots, y_N) - \sum_{k \neq i} u_k(y_k)\},$$
for all $i$ and $x$.
Then we begin by choosing $\rn < \frac{\alpha(\rc)}{2}$ and we have the following estimate. If $x \not \in \cup_{i=2} ^N B(\overline{x}_i, \rn)$
then
\begin{eqnarray}\label{firststep}
u_1(x)&\leq & c_n(x, \overline{x}_2, \dots , \overline{x}_N) - u_2(\overline{x}_2)- \dots -u_N(\overline{x}_N) \leq  \nonumber \\
& \leq & c(x, \overline{x}_2, \dots , \overline{x}_N) - u_2(\overline{x}_2)- \dots -u_N(\overline{x}_N) \leq \nonumber \\
&\leq & \frac{N(N-1)}{\rn}- (N-1) \kn.
\end{eqnarray}
We now select a, possibly smaller,  $\rn$ so that 
\begin{equation}\label{smallcontroll}
\rho( B(\overline{x}_1,\rn)) + \dots + \rho( B(\overline{x}_N,\rn)) < \varepsilon <<\frac{M}{4}. 
\end{equation}
It follows that the set
$$ \{ \by \in Q(\rc)\cap spt P_n \ | \  \by \not \in D_{\alpha(\rc)}, \ y_i \not \in B(\overline{x}_j,\rn) \ \mbox{for}\ i, j= 1, \dots, N\}$$
has positive $P_n$ measure.
Next, take $(\overline{y}_1, \dots , \overline{y}_N) $ in this last set.
Since $\overline{y}_1$ does not belong to the balls centered at $\overline{x}_i$ the estimate above holds and then 
\begin{eqnarray*}
u_2(\overline{y}_2)+ \dots+ u_N(\overline{y}_N)&=& c_n(\overline{y}_1, \dots , \overline{y}_N)- u_1(\overline{y}_1) \geq \\
&\geq & - (\frac{N(N-1)}{\rn}- (N-1) \kn).
\end{eqnarray*}
Finally, up to a division by $2$ of $\rn$  we have that for all $x \in \cup_{i=2} ^N B(\overline{x}_i, \rn)$ 
\begin{eqnarray}\label{laststep}
u_1(x)&\leq & c_n(x, \overline{y}_2, \dots , \overline{y}_N) - u_2(\overline{y}_2)- \dots - u_N(\overline{y}_N) \leq \nonumber \\
& \leq & c(x, \overline{y}_2, \dots , \overline{y}_N) - u_2(\overline{y}_2)- \dots -  u_N(\overline{y}_N) \leq \nonumber \\
&\leq & \frac{N(N-1)}{\rn}+ \frac{N(N-1)}{\rn}- (N-1)\kn.
\end{eqnarray}
This completes the estimate from above of $u_1$. The same computation holds for the other $u_i$.
The estimate from above of the $u_i$ given by  (\ref{firststep}) and (\ref{laststep}) translates in an estimates from below which holds $\rho$-a.e.. Indeed for $\rho-$a.e. $x$ there holds
$$u_1(x) =\inf \{c_n(x , x_2,\dots, x_n)- u_2(x_2)-\dots-u_N(x_N) \} \geq \ (N-1)^2 \kn -\frac{2N(N-1)^2}{\rn}.$$
It remains to remark that 
$$u(x)=\frac{1}{N} \sum_{i=1}^N u_i(x)$$
is a Kantorovich potential for $c_n$ and it satisfies the required estimate.
\end{proof}
\begin{prop}\label{controlunif}
The constants $\rn$ and $\kn$ in Proposition \ref{controln} can be controlled  uniformly in $n$.
\end{prop}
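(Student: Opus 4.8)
The plan is to follow the construction in the proof of Proposition~\ref{controln} and check that each of the constants $\rn$ and $\kn$ is controlled only by the fixed data $\rho$, $\rc$, $\alpha(\rc)$, $M$ (none of which involves $n$) and by the location of the auxiliary point $(\overline{x}_1^n,\dots,\overline{x}_N^n)\in spt P_n\cap Q(\rc)\setminus D_{\alpha(\rc)}$ appearing there --- a location which, for every $n$ as in that statement, is confined in each variable to the fixed compact set $\overline{B}(0,\rc)\subset\R^3$. The constant $\kn$ is handled at once: since $\kn=\frac1N\,c(\overline{x}_1^n,\dots,\overline{x}_N^n)$ and the chosen point lies outside $D_{\alpha(\rc)}$, one has $|\overline{x}_i^n-\overline{x}_j^n|>\alpha(\rc)$ whenever $i\neq j$, so that $0\le \kn\le \frac{N-1}{2\alpha(\rc)}$ uniformly in $n$.

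For $\rn$ I would first fix $\varepsilon=\frac M8$ once and for all, which is legitimate because $M=P(Q(\rc))$ does not depend on $n$. Reading the proof of Proposition~\ref{controln}, the conditions it imposes on $\rn$ are: $\rn<\frac{\alpha(\rc)}2$, the smallness requirement $\rho(B(\overline{x}_1^n,\rn))+\dots+\rho(B(\overline{x}_N^n,\rn))<\varepsilon$ of $(\ref{smallcontroll})$, and a final division by $2$. Every condition but $(\ref{smallcontroll})$ is already uniform in $n$, so the only risk is that $(\ref{smallcontroll})$ forces $\rn\to0$ as the base points $\overline{x}_i^n$ wander with $n$. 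I would exclude this with the elementary lemma: \emph{if $\mu$ is a non-atomic finite Borel measure on $\R^3$ and $K\subset\R^3$ is compact, then for every $\eta>0$ there is $\delta>0$ with $\sup_{x\in K}\mu(\overline{B}(x,\delta))<\eta$.} (Otherwise one obtains $x_k\to x_\ast\in K$ and $\delta_k\downarrow0$ with $\mu(\overline{B}(x_k,\delta_k))\ge\eta$, whence $\mu(\{x_\ast\})\ge\eta$ --- impossible.) Applying it with $\mu=\rho$, $K=\overline{B}(0,\rc)$ and $\eta=\varepsilon/N$ produces $\delta_0>0$ depending only on $\rho$, $\rc$, $M$, and then $r_\ast:=\frac14\min\{\delta_0,\alpha(\rc)\}>0$ serves as (the final value of) $\rn$ simultaneously for all admissible $n$; the positivity of the $P_n$-mass of the set used in the proof is still guaranteed by $(\ref{bign})$ since $\varepsilon\ll M/4$.

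Inserting $\rn\ge r_\ast$ and $0\le\kn\le\frac{N-1}{2\alpha(\rc)}$ into the estimate of Proposition~\ref{controln} then yields, for the corresponding Kantorovich potential $u=u^n$ of $c_n$,
$$|u^n|\le \frac{2N(N-1)^2}{\rn}-(N-1)^2\kn\le \frac{2N(N-1)^2}{r_\ast}\qquad\rho\text{-a.e.},$$
with $r_\ast>0$ independent of $n$, which is the assertion.

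The main obstacle is precisely this last uniformity: one must guarantee that the admissible radius $\rn$ does not degenerate along the sequence although the base points $\overline{x}_i^n$ change with $n$. Non-atomicity of $\rho$ alone does not suffice --- it only controls balls centred at a single point --- and it is its combination with the compactness of $\overline{B}(0,\rc)$, i.e.\ with the fact that the whole construction takes place inside the cube $Q(\rc)$ chosen before introducing the approximations $c_n$, that provides a uniform modulus of smallness for $\rho$-measures of small balls, and therefore a uniform lower bound for $\rn$.
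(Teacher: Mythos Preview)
Your argument is correct and takes a genuinely different route from the paper's. The paper exploits the weak-$*$ convergence $P_n\rightharpoonup P$: it fixes once and for all a point $\bx\in spt P\cap Q(\rc)\setminus D_{\alpha(\rc)}$, chooses $r$ with $r<\alpha(\rc)/2$ and $N\sum_i\rho(B(\overline{x}_i,r))<M/4$, and then selects base points $\bx^n\in spt P_n$ with $\bx^n\to\bx$; for large $n$ one has $B(\overline{x}_i^n,r/2)\subset B(\overline{x}_i,r)$, which transfers the smallness condition to the approximating points and yields $\rn=r/4$, while $\kn\to c(\bx)/N$. You instead bypass the convergence $P_n\to P$ altogether: you only use that every $\overline{x}_i^n$ lies in the fixed compact $\overline{B}(0,\rc)$, and your elementary lemma on uniform smallness of a non-atomic measure over a compact produces a radius $\delta_0$ working for all centers simultaneously. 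This is somewhat more robust --- it applies to any $n$ satisfying (\ref{bign}), irrespective of convergence --- and isolates cleanly the role of non-atomicity. The paper's route, by contrast, fits naturally into the $\Gamma$-convergence scheme and gives an explicit limit for $\kn$.

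One small quantitative slip: your choice $\varepsilon=M/8$ is not small enough. The set of auxiliary points $\by$ in the proof of Proposition~\ref{controln} has positive $P_n$-measure only if $P_n(Q(\rc)\setminus D_{\alpha(\rc)})>\sum_{i,j}\rho(B(\overline{x}_j^n,\rn))$, i.e.\ $N\varepsilon<M/4$; for $N\ge2$ this fails with $\varepsilon=M/8$. Taking instead $\varepsilon=M/(8N)$ --- exactly what the paper's condition $N\sum_i\rho(B(\overline{x}_i,r))<M/4$ encodes --- repairs this at no cost to your argument.
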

\begin{proof} 
Consider a point $\bx=(\overline{x}_1, \dots , \overline{x}_N) \in spt P \setminus D_{\alpha(\rc)}$ and in $Q(\rc)$.
Then consider $r$ such that 
\begin{enumerate}
\item $r < \frac{\alpha(\rc)}{2}$,
\item $N \sum_i \rho (B(\overline{x}_i, r) < \frac{M}{4}$.
\end{enumerate}
Now we will show that in the construction of Proposition  \ref{controln}  we may choose  $\displaystyle k_n \to \frac{ c(\overline{x}_1, \dots , \overline{x}_N)}{N}$ and
$\displaystyle \rn=\frac{r}{4}$. 
Infact, since $P_n \stackrel{*}{\rightharpoonup}P$ we may choose in the estimates of Proposition \ref{controln}  a sequence of points $\bx^n=(\overline{x}_1^n, \dots , \overline{x}_N^n) \to(\overline{x}_1, \dots , \overline{x}_N). $
The convergence of $\bx^n $ to $\bx$ already gives the required convergence of $\kn$. Moreover, we also have that for $n$ big enough $B(\overline{x}_i^n, \frac{r}{2}) \subset B(\overline{x}_i, r)$ for all $i$ and then  (\ref{smallcontroll}) is satisfied. Finally in Proposition \ref{controln} we divided $\rn$ by 2 to have 
some distance between $\overline{x}_i$ and $\overline{y}_j$ and this last division brings us to $\displaystyle \rn=\frac{r}{4}$. 
\end{proof}

\subsection{Conclusion and remarks}
\begin{thm} The following duality holds
\begin{multline}\label{dual}
\min_{\Pi(\rho)} \int_{\R^{Nd}} \sum_{1\leq i <j \leq N} \frac{1}{|x_i-x_j|} d P(x_1, \dots, x_N)\\
= \sup\left\{N\int u(x) d \rho (x) \ : \  u \in L^1_\rho\ \mbox{and}\ u(x_1)+\dots+u(x_n) \leq c(x_1,\dots,x_n)\right\},
\end{multline}
and the right-hand side of equation (\ref{dual}) above admits a bounded maximizer.
\end{thm}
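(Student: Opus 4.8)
The plan is to propagate the finite-dimensional linear-programming duality $\max D(c_n,\cdot)=\min K(c_n,\cdot)$ of Lemma \ref{maxcompatti} up to the Coulomb cost by means of $\Gamma$- and $\Gamma^+$-convergence, and then to pass to the limit in the approximating maximizers using the uniform bounds of Propositions \ref{controln}--\ref{controlunif}. As in those statements, I keep the standing assumptions that $\rho$ is not concentrated on a set of cardinality $\le N-1$ and that $\rho$ has no atoms.

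\emph{Step 1 (equality of the two values).} By Proposition \ref{gammamisure} the functionals $K(c_n,\cdot)$ are equicoercive for the $w^*$-topology and $\Gamma$-converge to $K(c,\cdot)$, so Theorem \ref{convminima} gives that $K(c,\cdot)$ attains its minimum, that $\min K(c_n,\cdot)\to\min K(c,\cdot)$, and that a sequence $(P_n)$ of minimizers of $K(c_n,\cdot)$ has a $w^*$-cluster point $P$ minimizing $K(c,\cdot)$ (this is the plan used in Subsection \ref{comp}). For each $n$, pick by Lemma \ref{maxcompatti} a maximizer $(u_1^n,\dots,u_N^n)$ of (\ref{varmax}) and symmetrize it into $u^n:=\frac1N\sum_{i=1}^N u_i^n$; averaging the inequality $u_1^n(x_{\sigma(1)})+\dots+u_N^n(x_{\sigma(N)})\le c_n(x_1,\dots,x_N)$ over $\sigma\in\Perm^N$ and using the permutation invariance of $c_n$ shows that $u^n(x_1)+\dots+u^n(x_N)\le c_n\le c$, while $N\int u^n\,d\rho=\max D(c_n,\cdot)=\min K(c_n,\cdot)$. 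Since $c_n\le c$, each $u^n$ is admissible for $D(c,\cdot)$, whence $\sup D(c,\cdot)\ge N\int u^n\,d\rho\to\min K(c,\cdot)$. For the reverse inequality the $\Gamma^+$-liminf inequality in Proposition \ref{gammamassimi} produces, for every admissible $v$ and every $\varepsilon>0$, a competitor admissible for all large $c_n$ with dual value $>D(c,v)-\varepsilon$; hence $\liminf_n\min K(c_n,\cdot)=\liminf_n\max D(c_n,\cdot)\ge\sup D(c,\cdot)$, and therefore $\sup D(c,\cdot)=\min K(c,\cdot)$, which is (\ref{dual}).

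\emph{Step 2 (a bounded maximizer).} Propositions \ref{controln} and \ref{controlunif} let us choose the maximizers $u^n$ above with $\|u^n\|_{L^\infty_\rho}\le C$ for a single constant $C$ independent of $n$; this is where the absence of atoms of $\rho$, the non-degeneracy near the diagonal (Proposition \ref{diagcube}) and the fact that a minimizer charges a fixed cube off a neighbourhood of the diagonal (inequality (\ref{bign})) enter. A uniform $L^\infty_\rho$ bound makes $(u^n)$ equi-integrable, so by the Dunford--Pettis theorem some subsequence $u^{n_k}$ converges weakly in $L^1_\rho$ to a function $u$, which still satisfies $|u|\le C$ $\rho$-a.e. (test the weak convergence against indicator functions). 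The $\Gamma^+$-limsup inequality of Proposition \ref{gammamassimi} gives $\limsup_k D(c_{n_k},u^{n_k})\le D(c,u)$; since $D(c_{n_k},u^{n_k})=N\int u^{n_k}\,d\rho\to N\int u\,d\rho$ (testing against the constant $1\in L^\infty_\rho$), this forces $D(c,u)=N\int u\,d\rho$, i.e. $u$ is admissible for $D(c,\cdot)$. Finally $N\int u\,d\rho=\lim_k\min K(c_{n_k},\cdot)=\min K(c,\cdot)=\sup D(c,\cdot)$ by Step 1, so $u$ is a maximizer, and it is bounded.

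\emph{Where the difficulty lies.} The substantial work has already been done in Propositions \ref{controln}--\ref{controlunif}: the genuine obstacle is the $n$-uniform $L^\infty$ estimate on the approximate Kantorovich potentials, resting on the blow-up of $c$ near the diagonal and on keeping a minimizer of the limit problem at positive distance from the diagonal inside a fixed cube. Once that is granted, what remains is a routine concatenation: $\Gamma$-convergence of the primal problems (Proposition \ref{gammamisure}, Theorem \ref{convminima}), finite-dimensional linear programming duality for the approximations (Lemma \ref{maxcompatti}), $\Gamma^+$-convergence of the dual problems (Proposition \ref{gammamassimi}), and the Dunford--Pettis theorem to convert the uniform bound into weak-$L^1$ compactness. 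The only point requiring a little care is that $D(c_n,\cdot)$ $\Gamma^+$-converges in the weak-$L^1$ topology and not in $L^\infty$, which is exactly why the passage to the limit goes through equi-integrability rather than through a direct compactness statement for bounded sets.
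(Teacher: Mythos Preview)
Your proof is correct and follows essentially the same route as the paper: convergence of the primal minima via Proposition \ref{gammamisure}, finite-dimensional duality via Lemma \ref{maxcompatti}, $\Gamma^+$-convergence of the dual via Proposition \ref{gammamassimi}, and passage to the limit in the maximizers using the uniform $L^\infty$ bound of Propositions \ref{controln}--\ref{controlunif} together with weak-$L^1$ compactness. Your write-up is in fact more explicit than the paper's own proof, which simply strings the cited propositions together; in particular you spell out the Dunford--Pettis step and the use of the $\Gamma^+$-limsup inequality to recover admissibility of the weak limit, which the paper leaves implicit.
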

\begin{proof}
By  Prop. \ref{gammamisure} 
$$\min_{\Pi(\rho)} \int_{\R^{Nd}} \sum_{1\leq i <j \leq N} \frac{1}{|x_i-x_j|} d P(x_1, \dots, x_N)= \lim_{n \to \infty} \min K(c_n,P).$$ 
By Propositions \ref{gammamassimi} , \ref{maxcompatti}, \ref{controln} and \ref{controlunif}
$$  \min K(c_n,P)= \max D(c_n,u),$$
$$\sup\left\{N\int u(x) d \rho (x) \ \left|\begin{array}{c} u \in L^1_\rho\ \mbox{and}\\\ u(x_1)+\dots+u(x_n) \leq c(x_1,\dots,x_n)\end{array}\right.\right\}=\lim_{n \to \infty} \max D(c_n,u), $$
and there exists a sequence $\{u_n\}$  where $u_n$ is a maximizer of $D(c_n,u)$  and  where $|u_n|$ is uniformly bounded, thus weakly compact in $L^1_\rho$.
\end{proof}
\begin{corol} Assume that $\rho$ does not have atoms then  there exists $0<\alpha$ such that for every minimizer $P$ of $K(c, \cdot)$ 
$$P(D_\alpha)=0$$
\end{corol}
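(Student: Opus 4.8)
The plan is to establish a \emph{quantitative} form of complementary slackness at the level of the discrete approximations, where the dual constraint holds pointwise everywhere, and then to transfer the conclusion to an arbitrary minimizer $P$ of $K(c,\cdot)$ by a Chebyshev-type estimate with a vanishing error. In particular this will be a global, uniform-in-$P$ strengthening of Proposition~\ref{diagcube}, and it reuses the machinery already assembled for the Theorem above.

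Two facts about the approximating problems are needed. First, by Propositions~\ref{controln} and~\ref{controlunif} there is a constant $K=K(N,\rho)>0$ such that, for all large $n$, $D(c_n,\cdot)$ has a maximizer $u^n$ with $|u^n|\le K$ (recall that $\rn$ is kept fixed and $\kn$ converges, so the bound of Proposition~\ref{controln} is uniform in $n$). Second — and this is why one works at finite $n$ rather than with the limiting potential — since $c_n$ and the admissible potentials are step functions subordinate to the same finite partitions of $\R^3$, the finite-dimensional linear programming duality used in Lemma~\ref{maxcompatti} produces such a $u^n$ with
\[
u^n(x_1)+\dots+u^n(x_N)\le c_n(x_1,\dots,x_N)\qquad\text{for \emph{every}}\ (x_1,\dots,x_N)\in\R^{3N},
\]
and not merely $\rho^N$-a.e. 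This is the crux of the matter: the limiting inequality $u(x_1)+\dots+u(x_N)\le c$ is only known $\rho^N$-a.e., while for a general $P\in\Pi(\rho)$ a $\rho^N$-negligible set may carry positive $P$-mass, so a naive complementary-slackness argument with the bounded maximizer of $D(c,\cdot)$ does not close. I expect this mismatch to be the only genuine obstacle.

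Now fix any minimizer $P$ of $K(c,\cdot)$. Using $\pi^i_\sharp P=\rho$, the identity $N\int u^n\,d\rho=\max D(c_n,\cdot)=\min K(c_n,\cdot)$ (Lemma~\ref{maxcompatti}), and $\int c_n\,dP\le\int c\,dP=\min K(c,\cdot)$ (minimality of $P$ and $0\le c_n\le c$), one gets for the nonnegative integrand below
\[
0\ \le\ \int\Big(c_n-\sum_{i=1}^N u^n(x_i)\Big)dP\ =\ \int c_n\,dP-\min K(c_n,\cdot)\ \le\ \min K(c,\cdot)-\min K(c_n,\cdot)\ =:\ \varepsilon_n ,
\]
and $\varepsilon_n\to0$ by Proposition~\ref{gammamisure} and Theorem~\ref{convminima}. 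Set $\alpha:=\tfrac1{4NK}$ and fix $R>0$. Inspecting the construction of $c_n$ shows that for $n$ large (depending on $R$ and $\alpha$) one has $c_n\ge\tfrac1{2\alpha}=2NK$ on $D_\alpha\cap\bQ(R)$: every such point, together with the dyadic cubes meeting it, lies inside $\bQ(n)$, on each of those cubes the two close coordinates stay within distance $\alpha+O(2^{-n})\le2\alpha$ so $c\ge\tfrac1{2\alpha}$ there, and $c_n$ at the point is an infimum of $c$ over one such cube. Since also $\sum_i u^n(x_i)\le NK$ everywhere, the integrand above is $\ge NK$ on $D_\alpha\cap\bQ(R)$, whence Chebyshev's inequality gives
\[
P\big(D_\alpha\cap\bQ(R)\big)\ \le\ \frac1{NK}\int\Big(c_n-\sum_{i=1}^N u^n(x_i)\Big)dP\ \le\ \frac{\varepsilon_n}{NK}\ \xrightarrow[n\to\infty]{}\ 0 .
\]
As the left-hand side is independent of $n$, $P(D_\alpha\cap\bQ(R))=0$; letting $R\to\infty$ yields $P(D_\alpha)=0$. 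Since $\alpha=\tfrac1{4NK}$ depends only on $N$ and $\rho$, it works simultaneously for every minimizer, which is the assertion. The only routine verifications left are the uniform bound $|u^n|\le K$ (already contained in Propositions~\ref{controln} and~\ref{controlunif}) and the elementary estimate $c_n\ge\tfrac1{2\alpha}$ on $D_\alpha\cap\bQ(R)$ for large $n$.
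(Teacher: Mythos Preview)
Your proof is correct and takes a different route from the paper. The paper argues directly by complementary slackness with the limiting bounded potential: given a maximizer $u$ of $D(c,\cdot)$ with $|u|\le K$, duality yields $\sum_i u(x_i)=c(\bx)$ $P$-a.e.\ for every minimizer $P$, and boundedness of $u$ then forces $c\le NK$ $P$-a.e., i.e.\ $P(D_\alpha)=0$ for $\alpha$ of order $(NK)^{-1}$. You instead remain at the level of the discrete approximations and use a Chebyshev estimate with the vanishing defect $\varepsilon_n=\min K(c,\cdot)-\min K(c_n,\cdot)$.

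What your approach buys is precisely the point you flag: the paper's two-line argument needs $\sum_i u(x_i)\le c$ to hold $P$-a.e., whereas the dual constraint is only asserted $\rho^N$-a.e., and a $\rho^N$-null set can carry positive $P$-mass. The paper does not comment on this. It can be repaired---for instance, the uniformly bounded step-function maximizers $u^n$ satisfy $\sum_i u^n(x_i)\le c_n\le c$ pointwise; by Mazur's lemma convex combinations converge $\rho$-a.e.\ to a representative $\hat u$ of $u$, and since $\rho$-null sets pull back to $P$-null sets through each marginal $\pi^i$, the pointwise inequality passes to $\sum_i \hat u(x_i)\le c$ $P$-a.e.\ for every $P\in\Pi(\rho)$---but your argument bypasses this entirely by exploiting the genuinely pointwise constraint at finite $n$. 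The paper's route, once completed, is a little shorter and gives a marginally sharper $\alpha$; yours is more self-contained and makes the role of the approximation fully explicit.
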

\begin{proof} For every Kantorovich potential $u$ exploiting the duality (or complementary slackness) relations we obtains that for every 
minimizer $P$ of $K(c, \cdot)$ 
$$ u(x_1)+\dots+u(x_N)=c(x_1, \dots, x_N) \ \ P-a.e.$$
Since there exists a bounded Kantorovich potential we obtain the conclusion.
\end{proof}

\begin{rmk}\label{nokellerer}
Driven by the interest in some application we have chosen to present the result in the case in the case of Coulomb cost. However fixing some constants and exponent (in particular in  Lemma 3.4, Prop. 3.7 and Subsection \ref{comp}) and the definition of {\it bad set}  the same result may be proved for several costs which are lower semi-continuous and bounded from below. Among them 
$$ \sum_{1\leq i < j \leq N} \frac{1}{|x_i-x_j|^s} .$$
The case of even less regular costs has been considered in \cite{beiglbock2012general} but in that case it is necessary to give a different interpretation of the problems.

In \cite{kellerer1984} Kellerer considered a duality theory for multimarginal problems for a very wide class of costs. 
However in order to have existence of maximizers for the dual problems (see Theorem 2.21 in \cite{kellerer1984}) it is required that the cost 
$c$ be controlled by a direct sum of functions in $L^1_\rho$ also from above. In the case of the Coulomb cost this would read
$$ \sum_{1\leq i <j \leq N} \frac{1}{|x_i-x_j|} \leq u(x_1)+\dots+u(x_N) $$
and this is not possible since the right-hand side allows for $x_1=\dots=x_n$.  
\end{rmk}

\section*{Acknowledgement}
The research of the author is part of the project 2010A2TFX2 {\it Calcolo delle Variazioni}  financed by the Italian Ministry of Research
 and is partially financed  by the {\it ``Fondi di ricerca di ateneo''}  of the  University of Pisa. 
 
 The research of the author is part of the project  {\it "Analisi puntuale ed asintotica di energie di tipo non locale collegate a modelli della fisica"}  of the  GNAMPA-INDAM.

The author wish to thank Giuseppe Buttazzo and Aldo Pratelli for several discussions on the research presented here.

The authors also would like to thank the anonymous reviewer for his valuable comments and suggestions to improve the paper.

\bibliographystyle{plain}

\end{document}